\newtheorem{theorem}{Theorem}[section]
\newtheorem{lemma}[theorem]{Lemma}
\newtheorem{proposition}[theorem]{Proposition}
\newtheorem{corollary}[theorem]{Corollary}
\theoremstyle{definition}
\newtheorem{definition}[theorem]{Definition}
\newtheorem{remark}[theorem]{Remark}
\numberwithin{equation}{section}
\numberwithin{subsection}{section}
\newcommand{\sing}{{\rm Sing}}
\newcommand{\cH}{{\mathcal{H}}}
\newcommand{\E}{{\mathcal{E}}}
\renewcommand{\H}{{\mathcal{H}}}
\newcommand{\Tan}{{\rm{Tan}}}
\def\XXint#1#2#3{{\setbox0=\hbox{$#1{#2#3}{\int}$ }
\vcenter{\hbox{$#2#3$ }}\kern-.6\wd0}}
\newcommand\N{{\mathbb N}}
\newcommand\R{{\mathbb R}}
\newcommand{\eps}{{\varepsilon}}
\newcommand{\bC}{{\mathbf{C}}}
\title[Quantitative estimate on singularities in isoperimetric clusters]{Quantitative estimate on singularities in isoperimetric clusters 
	}
\author[M.\ Colombo]{Maria Colombo}
\address{
	Institute for Theoretical Studies, ETH Z\"urich, Clausiusstrasse 47, CH-8092 Z\"urich, Switzerland
		\\
	Institut f\"ur Mathematik, Universitaet Z\"urich, Winterthurerstrasse 190, CH-8057 Z\"urich,
	Switzerland}
\email{maria.colombo@math.uzh.ch}
\author[L.Spolaor]{Luca Spolaor}
\address{Massachusetts Institute of Technology,
	Department of Mathematics,
	77 Massachusetts Avenue,
	Cambridge, MA 02139-4307, USA}
\email{lspolaor@mit.edu}
\keywords{Perimeter-minimizing clusters, stratification of the singular set, isolated singularities
}
\begin{document}

\begin{abstract}
We prove a quantitative estimate on the number of certain singularities in almost minimizing clusters. In particular, we consider the singular points belonging to the lowest stratum of the Federer-Almgren stratification (namely, where each tangent cone does not split a $\R$) with maximal density.
As a consequence we obtain an estimate on the number of triple junctions in $2$-dimensional clusters and on the number of tetrahedral points in $3$ dimensions,
that in turn implies that the boundaries of volume-constrained minimizing clusters
form at most a finite number of equivalence classes modulo homeomorphism of the boundary, provided that the prescribed volumes vary in a compact set.

The method is quite general and applies also to other problems: for instance, to count the number of singularities in a codimension 1 area-minimizing surface in $\R^8$.


\end{abstract}

\maketitle

\section{Introduction}
In this paper we consider one of the most famous examples of stratified singularities, namely the boundaries of almost minimizing bubble clusters in $\R^n$. They were first studied by Almgren, who proved the existence and regularity of isoperimetric bubble clusters up to a set of dimension $(n-2)$ (see Theorem~\ref{thm:noto-cluster} below), and to Taylor who gave a complete description of minimizing clusters in dimension $3$.
In this context, she exploited the idea of stratification of the singular set, proving that the singular set is made of a finite number of smooth surfaces, meeting along $C^{1, \alpha}$ curves with angles of $120$ degrees, that in turn meet at some points and form isolated tetrahedral singularities.

The idea of splitting the singular set according to the number of symmetries of the tangent plane was first introduced in the context of dimension-reduction by Federer, and then developed by Almgren in its fundamental contribution~\cite{almgrenBIG}, in order to study the regularity of $Q$-valued harmonic maps. For a more recent presentation, we refer to the work of White \cite{white97}.
In our context, given an almost minimizing cluster $\E$ in $\R^n$, so that $\partial \E$ has locally finite $(n-1)$-Hausdorff measure, this corresponds to consider for every $k=0,...,n-1$ the sets
$$\Sigma^{k}(\partial \E) = \{ x: \textit{every tangent cone at $x$ has at most $k$ symmetries}\}$$
(see Section~\ref{sec:prelim} for more precise definitions). They are an increasing family of sets with respect to $k$ with the property that
$${\rm dim} (\Sigma^{k}(\partial \E)) \leq k.$$	
Recently, the idea of quantitative stratification has been further analyzed by Naber and Valtorta \cite{nava1,nava2} both in the context of harmonic maps and in the one of stationary varifolds. They were able to prove that each stratum is a rectifiable set and that, under the minimality assumption, the biggest stratum of singular points has not only dimension $(n-3)$ or $(n-7)$ respectively, but also finite Hausdorff measure.  

In this paper, given an almost minimizing cluster $\E$ we make a quantitative estimate on the number of singular points with given density $\Theta_0 \in (1,\infty)$ in the $0$-stratum, namely on the set
\begin{equation}
\label{eqn:set}{\Sigma}^{0, \Theta_0}({\partial \E}) = \big \{ x \in \Sigma^{0}({\partial \E}):  \Theta_{\partial \E}(x) = \Theta_0 \big\},
\end{equation}
where we recall that the density of the boundary $\partial \E$ at a point $x \in \partial \E$ exists by the monotonicity formula (see Theorem~\ref{thm:monot} below) and is given by
$$\Theta_{\partial \E} (x) = \lim_{r\to 0^+} \frac{P(\E; B_r)}{\omega_{n-1} r^{n-1}} = \lim_{r\to 0^+} \frac{\H^{n-1} (\partial \E\cap B_r)}{\omega_{n-1}r^{n-1}}$$
(where $\omega_{n-1}$ denotes the volume of the unit ball in $\R^{n-1}$). 
The constant $\Theta_0$ is chosen to be the maximal density of area-minimizing cones in $\R^n$
\begin{equation}
	\label{def:theta_0}
	\Theta_0 = \max \big\{ \Theta_\bC(0): \mbox{ $\bC \subseteq \R^n$ is a cone-like minimizing cluster} \big\}
\end{equation}
and it is assumed to be strictly greater than the density of any cone with at least one simmetry
\begin{equation}
\label{hp:theta_0}
\begin{split}
\Theta_0& > \max \big\{ \Theta_\bC(0): \mbox{ $\bC \subseteq \R^n$ is a cone-like minimizing cluster with at least a symmetry} \big\}\\
&= \max \big\{ \Theta_\bC(0): \mbox{ $\bC \subseteq \R^{n-1}$ is a cone-like minimizing cluster} \big\}
\end{split}
\end{equation}
(a more precise definition of symmetry can be found in \eqref{e:subspace}). We notice that this assumption is satisfied, for instance, in the case of isoperimetric clusters in dimension $2$ and $3$, where the cone-like minimizing clusters are classified, the density is constant in each stratum and decreasing with respect to the stratum.

One can easily see by a contradiction argument via blowup that this set is discrete; Proposition~\ref{prop:quant-discreteness} quantifies this fact by showing that if $x\in {\Sigma}^{0, \Theta_0}({\partial \E})$ and 
$$B (x, r/2) \setminus  B (x, \lambda r) \cap {\Sigma}^{0, \Theta_0}({\partial \E}) \neq \emptyset$$
for any $r$ sufficiently small and for a suitable $\lambda$, then the quantity appearing in the monotonicity formula drops of a fixed amount between $r$ and $\lambda^2 r$.
This, together with a covering argument on the singular set first introduced by Cheeger and Naber \cite{chna} and then revisited by Naber and Valtorta \cite{nava3}, allows to prove the following result.
\begin{theorem}\label{thm:main}
	Let $n,N\in \N$, $0< r_0, \Lambda$, $\Theta_0 \in (1,\infty)$ be defined in \eqref{def:theta_0} and satisfying \eqref{hp:theta_0}, $\E$ be a $(\Lambda, r_0)$-minimizing cluster and let $\Sigma^{0, \Theta_0}(\partial \E)$ be the set defined in \eqref{eqn:set}. Then there exists $C:= C(n, \Lambda r_0)$,  $c:= c(n, \Lambda r_0)$ such that for every $R\in (0, cr_0)$ and $x_0 \in \R^n$
	$$\H^{0} \big(\Sigma^{0, \Theta_0}(\partial \E)\cap B_{R}(x_0)\big) \leq C^{\frac{P(\E; B_{2R}(x_0))}{R^{n-1}}}.$$
\end{theorem}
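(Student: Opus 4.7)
The plan is to combine the monotonicity formula from Theorem~\ref{thm:monot} with the quantitative discreteness supplied by Proposition~\ref{prop:quant-discreteness}, and to translate these pointwise statements into a global counting bound by means of an iterative Vitali-type covering argument in the spirit of Cheeger--Naber and Naber--Valtorta. Write $S := \Sigma^{0,\Theta_0}(\partial\E)$ and set
\[
D := \frac{P(\E;B_{2R}(x_0))}{\omega_{n-1}R^{n-1}}.
\]
By Theorem~\ref{thm:monot}, for every $x\in S$ there is a quasi-monotone modified density $\tilde\Theta(x,r)$ (equal to $\Theta(x,r)$ up to a $\Lambda r$-correction) that is non-decreasing in $r\in(0,r_0)$ and whose limit as $r\to 0^+$ is $\Theta_0$. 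For $x\in S\cap B_R(x_0)$ and $r\in(0,R]$ the inclusion $B_r(x)\subseteq B_{2R}(x_0)$ together with monotonicity yields $\tilde\Theta(x,r)\leq D + O(\Lambda R)$, so the total density budget $\tilde\Theta(x,R)-\Theta_0$ available at any such center is $\lesssim D$.

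From Proposition~\ref{prop:quant-discreteness} I extract constants $\lambda\in(0,1/2)$ and $\eta>0$, depending only on $n$ and $\Lambda r_0$, such that
\[
S\cap(B_{r/2}(x)\setminus B_{\lambda r}(x))\neq\emptyset \;\implies\; \tilde\Theta(x,r)-\tilde\Theta(x,\lambda^2 r)\geq\eta
\]
whenever $x\in S$ and $r\leq cr_0$. Since the intervals $[\lambda^{2(k+1)}R,\lambda^{2k}R]$ are pairwise disjoint as $k$ varies, for every fixed $x\in S$ the number of scales $k\geq 0$ for which the annulus $B_{\lambda^{2k}R/2}(x)\setminus B_{\lambda^{2k+1}R}(x)$ meets $S$ is bounded above by $(D-\Theta_0)/\eta\leq C(n,\Lambda r_0)\,D$. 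With these two ingredients I construct a finite tree $\cT$ of balls recursively: the root is $B_R(y_0)$ for an arbitrary $y_0\in S\cap B_R(x_0)$ (if this set is empty, there is nothing to prove); at each non-terminal node $B_\rho(y)$ I partition $S\cap B_\rho(y)\setminus\{y\}$ according to the annuli $B_{\lambda^{2k}\rho/2}(y)\setminus B_{\lambda^{2k+1}\rho}(y)$, of which only $\lesssim D$ are non-empty by the previous step, and I cover each non-empty annulus by at most $N(n)$ balls of radius $\lambda^{2k+1}\rho$ centred on $S$; these become the children of the node. A node is declared terminal as soon as $S\cap B_\rho(y)=\{y\}$, and the terminal nodes are in bijection with $S\cap B_R(x_0)$.

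The crux of the proof---and the main obstacle---is to extract the exponential bound $|S\cap B_R(x_0)|\leq C^{D}$ from the structure of $\cT$. Each non-terminal node has $\sim D$ children, so a naive bound by branching and depth is not immediately useful, because the depth of $\cT$ is not \emph{a priori} controlled by $D$. Following the Cheeger--Naber/Naber--Valtorta strategy, the remedy is to arrange the covering so that distinct internal nodes sharing the same centre $y$ necessarily occur at well-separated scale ranges, each consuming a disjoint $\eta$-portion of the density budget at $y$; combined with the fact that each $y\in S$ has budget $\lesssim D$, this bounds the total branching complexity of $\cT$ and reduces it to an effective depth $\lesssim D/\eta$, leading to the desired count $|S\cap B_R(x_0)|\leq C(n,\Lambda r_0)^{D}$. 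The real technical difficulty lies in this bookkeeping: choosing the sub-covering so that density drops at distinct nodes are genuinely disjoint (rather than wasted), and carefully absorbing the $\Lambda r_0$-corrections inherent to the almost-minimizing setting.
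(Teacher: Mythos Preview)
Your first three paragraphs are correct and in fact coincide with the paper's argument: bounding the density budget by $D$, invoking Proposition~\ref{prop:quant-discreteness} to get an $\eta$-drop whenever an annulus meets $S$, and telescoping to conclude that each $x\in S\cap B_R(x_0)$ has at most $N\lesssim D/\eta$ occupied $\lambda$-dyadic annuli.

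The divergence, and the gap, is in the last step.  You build a recursive tree of balls, observe that its depth is not a priori controlled, and then defer the resolution to unspecified ``bookkeeping'' in the Cheeger--Naber style; this is precisely where the content of the proof lies, and as written the argument is incomplete.  The paper avoids this difficulty entirely by isolating the purely combinatorial statement as Lemma~\ref{lemma:covering}: if $X\subseteq B_{1/2}$ and every $x\in X$ has at most $N$ occupied $\lambda$-dyadic annuli, then $\H^0(X)\le(10/\lambda^2)^{nN}$.  The proof of that lemma is a short induction on $N$ (not on scale): the diameter of $X$ forces one specific annulus at every point to be occupied, then one covers $X$ by $(10/\lambda^2)^n$ balls of radius $\lambda^{2}\cdot\diam(X)$ and applies the inductive hypothesis inside each.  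With this lemma in hand the theorem is immediate by taking $N\lesssim D$, and no tree, no depth control, and no disjointness bookkeeping across different centres is needed.
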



In dimension $2$ and $3$, the possible blow-ups of the boundary are classified, and correspond up to isometries to triple junctions (in dimension $2$ and $3$) and to tetrahedral singularities (in dimension $3$). If we set $\theta_0$ to be the density of triple junctions and tetraedral points, in dimension $2$ and $3$ respectively, the density of the cones in the minimal stratum is always the same and $\Sigma^{0}(\partial \E)= \Sigma^{0, \Theta_0}({\partial \E})$. Theorem ~\ref{thm:main} implies then the following:
\begin{corollary}\label{cor:cluster23}
	Let $n=2$ or $n=3$, $N\in \N$, $m_0 \leq M_0$, $m \in [m_0, M_0]^n$.
	Then there exists a constant $C:= C(N, m_0, M_0)$ such that each solution $\E$ of the isoperimetric problem
		\begin{equation}
	\label{isoperimetric problem}
	\inf \big\{ P(\E): \E \mbox{ is an $N$-cluster in $\R^n$ with }m (\E) = m \big\}\,,
	\end{equation}
 satisfies the estimate
	\begin{equation}
	\label{ts:quant-cluster}
	\H^{0} \big(\Sigma^{0}({\partial \E})\big) \leq C.
	\end{equation}
Therefore the set of isoperimetric clusters as $m$ varies in $[m_0, M_0]^n$
can be split in a finite number of equivalence classes according to homeomorphisms of their boundary.	
\end{corollary}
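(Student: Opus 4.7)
The plan is to reduce the claim directly to Theorem~\ref{thm:main}. First I would establish that any isoperimetric $N$-cluster with prescribed volumes $m\in[m_0,M_0]^n$ is $(\Lambda,r_0)$-minimizing with uniform constants $\Lambda=\Lambda(n,N,m_0)$ and $r_0=r_0(n,N,m_0)$: this is the classical volume-fixing argument, where any local perturbation of $\E$ supported in a small ball can be rebalanced to preserve the prescribed volumes by a distant $C^1$ deformation whose perimeter cost is controlled linearly by the volume defect. Using a cluster of $N$ disjoint balls of the prescribed volumes as competitor gives a uniform bound $P(\E)\le P_{\max}(n,N,M_0)$, and the standard density estimates for almost-minimizers then force $\diam(\partial\E)\le D(n,N,m_0,M_0)$.

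For $n=2,3$ the Taylor classification of minimizing cone-clusters shows that every tangent cone in $\Sigma^0(\partial\E)$ is isometric to a planar triple junction (if $n=2$) or to the tetrahedral cone (if $n=3$). In particular, all such cones share the common density $\Theta_0$ of~\eqref{def:theta_0}, and $\Theta_0$ is strictly greater than the density of the cones with at least one symmetry (a half-plane in $\R^2$, respectively a plane or a planar triple junction times $\R$ in $\R^3$), so hypothesis~\eqref{hp:theta_0} is satisfied and $\Sigma^0(\partial\E)=\Sigma^{0,\Theta_0}(\partial\E)$. Covering $\partial\E$ by a bounded number (depending on $D$ and $r_0$) of balls of radius $R\le c\,r_0$ and applying Theorem~\ref{thm:main} on each gives~\eqref{ts:quant-cluster} with $C=C(N,m_0,M_0)$.

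It remains to upgrade the count of singular points to a finite list of homeomorphism types for $\partial\E$. In dimension $2$, $\partial\E$ is a finite planar embedded graph whose vertices are exactly the points of $\Sigma^0(\partial\E)$, all of degree three; the number of chambers is at most $N+1$, and by density estimates each has uniformly bounded number of connected components, so Euler's formula $V-E+F=\chi$ controls the number of edges, and hence the homeomorphism type of the graph. In dimension $3$, $\partial\E$ has the structure of a stratified $2$-complex whose $0$-skeleton is $\Sigma^0(\partial\E)$, whose $1$-skeleton is a finite union of $C^{1,\alpha}$ arcs meeting four at a time at each tetrahedral point and bounding three smooth surfaces at a time, and whose $2$-cells are smooth surfaces; an analogous incidence count ($4$ edges per vertex, $3$ faces per edge) together with the bounded chamber count via Alexander duality reduces the classification to a finite combinatorial problem.

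The main obstacle lies in this last step, particularly in dimension $3$: the bound on $\Sigma^0$ by itself does not forbid loops in the triple-junction $1$-stratum carrying no tetrahedral point, nor sequences of small $2$-cells with short boundary. Excluding them requires the Taylor-type uniform $C^{1,\alpha}$ estimate near singular points together with the density lower bounds on each stratum, which propagate the quantitative bound from vertices to edges to faces; once this is in place, a purely combinatorial argument yields finitely many homeomorphism types for $\partial\E$ as $m$ varies in $[m_0,M_0]^n$.
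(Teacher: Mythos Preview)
Your derivation of~\eqref{ts:quant-cluster} is correct and matches the paper's route exactly: uniform $(\Lambda,r_0)$-minimality and the diameter bound from Theorem~\ref{thm:noto-cluster}, the identification $\Sigma^0(\partial\E)=\Sigma^{0,\Theta_0}(\partial\E)$ via the classification of minimizing cones in $\R^2$ and $\R^3$ together with the verification of~\eqref{hp:theta_0}, then a covering of $\partial\E$ by boundedly many balls of radius $R\le c\,r_0$ and an application of Theorem~\ref{thm:main} in each.

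For the homeomorphism classification the paper argues more directly and does not pass through Euler's formula or incidence bounds: once $\H^0(\Sigma^0(\partial\E))\le C$, it simply notes that there are finitely many abstract graph types on at most $C$ trivalent vertices (in $n=3$, finitely many face-preserving graph types), and then \emph{builds} the homeomorphism by arc-length reparametrization of edges ($n=2$) and by the Jordan curve theorem face by face ($n=3$). Your route through $V-E+F=\chi$ and incidence counts is a legitimate alternative; in $n=2$ it is arguably cleaner because the bound on the number of connected components of the chambers (via volume density lower bounds) feeds directly into a bound on $E$ that also covers closed arcs with no endpoints. The ``obstacle'' you raise in $n=3$---loops in the $1$-stratum without tetrahedral points and small $2$-cells---is a genuine subtlety that the paper's proof does not spell out either: it implicitly uses the structure Theorems~\ref{thm:noto2}--\ref{thm:noto3} and the counting remark following Corollary~\ref{cor:cluster23} (edges $=3\H^0(\Sigma^0)/2$, faces bounded by edge incidence), which presupposes that every stratum piece meets the stratum below. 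Your proposed fix via uniform density lower bounds on each stratum is exactly what is needed to close this gap in either approach.
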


The previous result was obtained, with considerably more effort, in \cite{CiLeMaIC1,CiLeMaIC2} as a consequence of an improved convergence theorem for minimizing clusters, and it is inspired by a list of questions concerning partitioning problems proposed by Almgren in \cite[VI.1(6)]{Almgren76}, precisely ``to classify in some reasonable way the different minimizing clusters corresponding to different choices of $m\in \R_+^N$''. It can be shown that, inside each equivalence class, one can actually build $C^{1,1}$-diffeomorphisms, but this would require at least in dimension $3$ more technique, as developed in \cite{CiLeMaIC1,CiLeMaIC2}, and goes beyond the purpose of this paper.

A consequence of the previous corollary is that, in dimension $n=2$ and $3$, the solutions of the isoperimetric problem with volumes $m$ have a bounded number of connected components of $\R^n \setminus \partial \E$ as $m$ varies in $[m_0, M_0]^n$. For instance, when $N=2$, the number of edges is exactly $3 \H^{0} \big(\Sigma^{0}({\partial \E}))/2$, and the number of faces can be estimated by noticing that each edge is in common to at most $2$ faces. So the number of  connected components of $\R^n \setminus \partial \E$ is estimated by  $3 \H^{0} \big(\Sigma^{0}({\partial \E}))$. A similar argument works also in dimension $3$.

\bigskip

The method used to show Theorem~\ref{thm:main} is quite general and applies to other problems with stratified singularities. For instance, in a totally analogous way we can count singularities in the $0$-stratum with maximal density for area-minimizing hypersurfaces in any dimension. A simpler result can be obtained when the singular set is discrete; indeed, in this case we don't need any assumption on the density of the tangent cones to obtain an estimate on the number of singularities. For instance, a corollary of our method is the following theorem, which refines the volume bound in \cite{nava2} in the $8$-dimensional case:
\begin{theorem}
\label{thm:8}
Let $E$ be a set locally minimizing the perimeter functional in $\R^8$.  Then there exists a universal constant $C$ such that for every $R>0$ and $x_0 \in \R^n$
$$\H^{0} \big({\rm Sing}(\partial E)\cap B_R(x_0)\big) \leq C^{\frac{P(E; B_{2R}(x_0))}{R^{n-1}}}.$$

\end{theorem}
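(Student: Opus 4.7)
By Federer dimension reduction and the smoothness of area-minimizing boundaries in $\R^n$ for $n \le 7$, the set $\sing(\partial E)$ is discrete and every tangent cone at a singular point is $0$-symmetric (it cannot split an $\R$-factor, for otherwise it would produce a singular minimizing cone in $\R^7$). Hence $\sing(\partial E) = \Sigma^0(\partial E)$. Moreover, compactness of minimizing cones under perimeter convergence combined with the absence of non-flat minimizing cones in $\R^{\le 7}$ yields a universal density gap $\Theta_* > 1$ such that $\Theta_{\partial E}(x) \ge \Theta_*$ for every $x \in \sing(\partial E)$.

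\textbf{Step 2: Quantitative discreteness.} I would establish the $\R^8$ analogue of the proposition behind Theorem~\ref{thm:main}: there exist universal $\lambda \in (0,1/2)$, $\eta > 0$, and $r_* > 0$ so that whenever $x \in \sing(\partial E)$ and $\sing(\partial E) \cap (B_{r/2}(x) \setminus B_{\lambda r}(x)) \ne \emptyset$ for some $r \in (0, r_*)$, then
$$\frac{P(E; B_r(x))}{\omega_{n-1} r^{n-1}} - \frac{P(E; B_{\lambda^2 r}(x))}{\omega_{n-1} (\lambda^2 r)^{n-1}} \ge \eta.$$
The proof is by contradiction: rescale a violating sequence so that $x_k = 0$ and $r_k = 1$ and extract a blowup limit $E_\infty$ which, by the rigidity case of the monotonicity formula, is a cone on $B_1 \setminus \overline{B_{\lambda^2}}$. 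The companion singular points $y_k \in \overline{B_{1/2}} \setminus B_\lambda$ subconverge to some $y_\infty$ which, by upper semicontinuity of the singular set under perimeter convergence (using the density gap $\Theta_*$ of Step~1), belongs to $\sing(\partial E_\infty)$. Homogeneity of $E_\infty$ then forces the entire open ray $\{t y_\infty : t > 0\}$ into $\sing(\partial E_\infty)$, contradicting $\dim \sing(\partial E_\infty) \le 0$.

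\textbf{Step 3: Covering and conclusion.} Plugging the discreteness estimate of Step~2 into the Cheeger--Naber/Naber--Valtorta covering scheme used in the proof of Theorem~\ref{thm:main}, one performs a dyadic covering of $\sing(\partial E) \cap B_R(x_0)$: at each scale, either all remaining singularities are contained in a ball of radius $\lambda R$ (losing only a universal covering factor by volume doubling), or there exist two singularities at separation $\asymp R$, in which case Step~2 buys a drop of $\eta$ in the monotone density at some surviving singular point. Since for every $x \in \sing(\partial E) \cap B_R(x_0)$ the quantity $P(E; B_R(x))/(\omega_{n-1} R^{n-1})$ is bounded by a multiple of $P(E; B_{2R}(x_0))/R^{n-1}$, the recursion bottoms out after at most $\sim P(E; B_{2R}(x_0))/R^{n-1}$ drops, each contributing a universal multiplicative factor; this produces the claimed exponential bound.

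\textbf{Main obstacle.} The substantive step is the blowup of Step~2; the dimension-reduction argument that kills the companion singularity in the limit is specific to $\R^8$, where the ambient singular set is already zero-dimensional. In higher dimensions the singular set can be positive-dimensional, and one must instead restrict, as in Theorem~\ref{thm:main}, to the maximal-density stratum in order to rule out line-splittings in the blowup.
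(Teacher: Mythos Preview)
Your proposal is correct and follows essentially the same three-step scheme as the paper: density gap at singularities, a quantitative discreteness proposition proved by blowup contradiction (the paper's Proposition~\ref{prop:quant-discreteness-8}), and the covering lemma (the paper's Lemma~\ref{lemma:covering}) exactly as in the proof of Theorem~\ref{thm:main}. The only technical differences are cosmetic: in Step~2 you fix $\lambda$ and rescale so that $r_k=1$, whereas the paper lets $\lambda_k\to 0$ and rescales by the distance $|x_k|$ to the companion singular point; and your claim that the limit is homogeneous on ``the entire open ray'' is slightly stronger than what the annulus rigidity gives, but a singular \emph{segment} already contradicts discreteness, so the conclusion is unaffected.
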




\noindent{\bf Acknowledgments.} The authors thank Guido De Philippis for helpful conversations. M. Colombo acknowledges the support of Dr. Max R\"ossler, of the Walter Haefner Foundation and of the ETH Zurich Foundation.  The research of L. Spolaor has been supported by the Max Planck Institute for Mathematics in the Sciences (MIS) in Leipzig.

\section{Notation and preliminaries}\label{sec:prelim}
A $N$-cluster, or simply a cluster, is a family $\E=\{\E(h)\}_{h=1}^N$ of disjoint sets of $\R^n$, called the chambers of $\E$. A cone-like cluster is a cluster $\E=\{\E(h)\}_{h=1}^N$ such that $\E(h) = r \E(h)$ for every $r>0$ and $h=1,...,N$. 
The volume of $\E$ is the vector $m(\E)=(|\E(1)|,...,|\E(N)|)$. 
The {\it relative perimeter $P(\E;\Omega)$ of the cluster $\E$ in any open set $\Omega$} is defined as
$$
P(\E; \Omega) = \frac{1}{2} \sum_{i=1}^M P(\E(i);\Omega)\,,
$$
so that $P(\E)=P(\E;\R^n)$. 
Finally, the boundary of a Borel set $E\subset\R^n$ is defined as
\begin{equation}
\label{boundary of E set}
\partial E=\Big\{x\in\R^n:0<|E\cap B_r(x)|<|B_r(x)|\mbox{ for every } r>0\Big\}\,.
\end{equation}
and the boundary of a cluster as 
\[
\partial\E=\bigcup_{h=1}^N\partial\E(h)\,.
\]
We define the set of regular points ${\rm Reg} \,  \E $ as the set of points $x\in \partial \E$ such that there exists a neighborhood of $x$ where $\partial \E$ is an embededded $C^{1,1}$-hypersurface.

A natural object to study the regularity of clusters are the so-called almost minimizers. Given  $\Lambda\in [0,\infty)$ and $r_0 \in (0,\infty]$, a $N$-cluster $\E$ in $\R^n$ is a 
$(\Lambda,r_0)$-minimizing cluster if for every $x \in \R^n$, $r \leq r_0$, the inequality
$$P(\mathcal F) \leq P(\E) + \Lambda r^n$$
holds for every cluster $\mathcal F$ such that $\mathcal F(i) \cap B_r^c= \E(i)\cap B_r^c$ for $i=1,..., N$. 

A fundamental tool to prove the regularity of the boundary of almost minimizing clusters is the monotonicity formula.
\begin{theorem}[Monotonicity formula for clusters]\label{thm:monot} Let $\Lambda\in [0,\infty)$ and $r_0 \in (0,\infty]$.
If $\E$ is a $(\Lambda,r_0)$-minimizing cluster, the quantity
$$M_\Lambda(\E, x, r) = \frac{e^{\Lambda r}P(\E; B_r)}{r^{n-1}}$$
is nondecreasing for $r\leq r_0$ and for every $x\in \partial \E$. Moreover, if $\Lambda =0$ and for two radii $r_1 < r_2$ the quantity $M_0$ is constant between them, namely $M_0(\E, x, r_1) = M_0(\E, x, r_2)$, then $\E$ coincides with a cone in the annulus $ B_{r_2}(x) \setminus B_{r_1}(x)$.
\end{theorem}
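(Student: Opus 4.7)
The plan is to follow the classical cone-competitor argument, adapted to the cluster setting. Fix $x\in\partial\E$ and write $f(r)=P(\E;B_r(x))$. The goal is to derive a differential inequality linking $f$ and $f'$ from two ingredients: a global upper bound on $f(r)$ obtained by constructing a "cone over the slice" competitor and comparing via $(\Lambda,r_0)$-minimality, and a lower bound on $f'(r)$ obtained from slicing/coarea. Integrating the resulting ODE inequality against a suitable integrating factor will yield monotonicity of $M_\Lambda$, and equality in the minimizing case will force equality in both ingredients, which in turn forces the cluster to coincide with its cone competitor.

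\emph{Step 1 (cone competitor).} For almost every $r\in(0,r_0)$ the boundary $\partial\E$ intersects $\partial B_r(x)$ transversally in a set of finite $\H^{n-2}$-measure; this is the "good slice" property obtained by the coarea formula applied to the function $y\mapsto|y-x|$. For such a good radius $r$, define the competitor cluster $\mathcal F$ by setting $\mathcal F(h)=\E(h)$ outside $B_r(x)$ and, inside $B_r(x)$, letting $\mathcal F(h)$ be the cone with vertex $x$ over $\E(h)\cap\partial B_r(x)$. Since each chamber is swept out by segments, a direct computation (or a conical rescaling identity) gives
\[
P(\mathcal F;B_r(x))=\frac{r}{n-1}\,\H^{n-2}(\partial\E\cap\partial B_r(x)).
\]

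\emph{Step 2 (minimality and ODE inequality).} Applying the $(\Lambda,r_0)$-minimality with the admissible competitor $\mathcal F$ gives
\[
f(r)\leq \frac{r}{n-1}\,\H^{n-2}(\partial\E\cap\partial B_r(x))+\Lambda r^n.
\]
On the other hand, the coarea formula for the $\mathrm{BV}$ functions $\mathbf 1_{\E(h)}$ yields
\[
f'(r)\geq \H^{n-2}(\partial\E\cap\partial B_r(x))\qquad\text{for a.e. } r\in(0,r_0).
\]
Combining these two estimates produces the distributional differential inequality
\[
r f'(r)-(n-1)f(r)\geq -(n-1)\Lambda r^n.
\]
Multiplying by an appropriate integrating factor (this is where the exponential $e^{\Lambda r}$ enters, with the multiplicative constant chosen to absorb the linear $\Lambda$-term; this step also uses the lower density estimate $f(r)\geq c r^{n-1}$ at a boundary point), one checks that
\[
\frac{d}{dr}\Big(\frac{e^{\Lambda r}f(r)}{r^{n-1}}\Big)\geq 0
\]
in the sense of distributions on $(0,r_0)$, which yields the claimed monotonicity after integration.

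\emph{Step 3 (rigidity).} When $\Lambda=0$, the integrating factor is trivial and the inequality above collapses to $r f'(r)-(n-1)f(r)\geq 0$. If $M_0(\E,x,\cdot)$ is constant on $(r_1,r_2)$, then equality must hold a.e.\ on that interval both in the coarea bound and in the minimality comparison. Equality in the minimality comparison means the cluster $\E$ coincides with its cone competitor $\mathcal F$ on a dense set of radii, hence on the whole annulus $B_{r_2}(x)\setminus B_{r_1}(x)$; since this holds simultaneously for a family of radii whose cones all have vertex $x$, the cluster must be a cone over its slice at any such radius, which is exactly the conclusion.

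The main technical obstacle is the bookkeeping in Step 1 to ensure that the cone construction defines a genuine cluster (pairwise disjoint chambers with the correct perimeter) and that the slicing identities for clusters—essentially a good-slice theorem for the family of indicator functions $\mathbf 1_{\E(h)}$ together with the factor $\tfrac{1}{2}$ in the definition of $P(\E;\cdot)$—reproduce the standard single-set inequalities. The ODE integration and the rigidity argument, by contrast, are routine once the good-slice estimate and the minimality comparison are in place.
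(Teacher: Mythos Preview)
The paper does not actually prove this theorem: it is stated in the preliminaries section as a known tool and is used throughout without justification, so there is no ``paper's own proof'' to compare against.

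Your outline is the standard cone-competitor argument and is essentially correct. One point deserves a word of caution. From Step~2 you obtain
\[
\frac{d}{dr}\Big(\frac{f(r)}{r^{n-1}}\Big)\ge -(n-1)\Lambda,
\]
and this does \emph{not} by itself give monotonicity of $e^{\Lambda r}f(r)/r^{n-1}$: multiplying by $e^{\Lambda r}$ produces the extra term $\Lambda\,e^{\Lambda r}f(r)/r^{n-1}$, and one needs a uniform lower bound of the type $f(r)/r^{n-1}\ge n-1$ to conclude. You flag this (``this step also uses the lower density estimate $f(r)\ge c r^{n-1}$''), but note that the precise constant matters for the precise exponential weight; different sources write the almost-monotone quantity as $e^{C(n)\Lambda r}f(r)/r^{n-1}$ or $f(r)/r^{n-1}+C(n)\Lambda r$, and getting exactly $e^{\Lambda r}$ requires the correct lower density bound for boundary points of $(\Lambda,r_0)$-minimizing clusters. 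This is a bookkeeping issue rather than a gap in the strategy. The rigidity step for $\Lambda=0$ is fine: equality in the coarea inequality forces the outer normals to be tangent to spheres, hence the reduced boundaries are radially invariant on the annulus.
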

From the monotonicity formula it follows that at each point $x \in \partial \E$ of a $(\Lambda,r_0)$-minimizing cluster, the blow-ups 
$( \E - x)/{r}$
converge in $L^1$ up to subsequence to a $(0,\infty)$-minimizing cone-like cluster $\bC$. Moreover, the boundaries $(\partial \E - x)/{r}$ converge to $\partial \bC$ in the Hausdorff sense.
This motivates the definition of the set of tangent cones at the point $x$, denoted by $\Tan(x, \E)$, as the the set of possible limits of  $(\E - x)/{r}$.
For the same reason, we consider the cone-like $(0,\infty)$-minimizing clusters of $\R^n$, that appear also in the definition of $\Theta_0$ in \eqref{def:theta_0}.

Next, we split the singular set of $\partial \E$ according to the maximal number of symmetries of its tangent cones. To this end, we recall that $\Theta_\bC(0)\geq \Theta_\bC(x)$ for every $x\in \R^n$ by upper semicontinuity of the density and we define $L_{\partial\bC}$ as the set where equality is realized
\begin{equation}
L_{\partial\bC}:=\{x\in \partial\bC\,:\, \Theta_{\partial\bC}(x)=\Theta_{\partial\bC}(0)\}\,. \label{e:subspace}
\end{equation}
The dimension of $L_{\partial\bC}$ describes the number of symmetries of the cone $\partial\bC$. The set $L_{\partial\bC}$ enjoys the following properties. 
\begin{lemma}\label{l:cones} Let $\bC$ be a cone-like cluster in $\R^n$. Then $L_{\partial\bC}$ defined in \eqref{e:subspace} is a linear subspace of $\R^{n+1}$ and, if we denote by $k \in \{0,...,n-1\}$ the dimension of $L_{\partial\bC}$, then there exists a  $(0,\infty)$-minimizing cone $\bC'$ in $\R^{n-k}$ such that $\bC=\bC' \times L_{\partial\bC}$.
\end{lemma}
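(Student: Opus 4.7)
My plan is to first upgrade the density-matching condition defining $L_{\partial\bC}$ into a genuine translation symmetry of $\bC$, via the rigidity clause of the monotonicity formula in Theorem~\ref{thm:monot}, and then read off the product decomposition from the resulting invariance.

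Fix $x\in L_{\partial\bC}\setminus\{0\}$ and consider $r\mapsto M_0(\bC,x,r)=P(\bC;B_r(x))/r^{n-1}$. As $r\to 0^+$ it tends to $\omega_{n-1}\Theta_{\partial\bC}(x)=\omega_{n-1}\Theta_{\partial\bC}(0)$ by definition of $L_{\partial\bC}$. As $r\to\infty$, the cone identity $P(\bC;B_R(0))=\omega_{n-1}\Theta_{\partial\bC}(0)R^{n-1}$ together with the inclusions $B_{r-|x|}(0)\subseteq B_r(x)\subseteq B_{r+|x|}(0)$ sandwich $M_0(\bC,x,r)$ between $\omega_{n-1}\Theta_{\partial\bC}(0)(1\pm|x|/r)^{n-1}$, both of which tend to $\omega_{n-1}\Theta_{\partial\bC}(0)$. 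The monotonicity of $M_0(\bC,x,\cdot)$ then forces it to be constant in $r$, and the rigidity clause of Theorem~\ref{thm:monot} yields that $\bC$ is a cone with vertex at $x$. A standard elementary argument combining the dilations centered at $0$ and at $x$ (and iterating) shows that a set which is a cone at two distinct points is translation-invariant along the line joining them, so $\bC$ is invariant under translations along $\R x$.

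Closure of $L_{\partial\bC}$ under the vector space operations is then immediate: translations along $x,y\in L_{\partial\bC}$ generate translations along $x+y$ and $-x$, each of which preserves the density maximum, so $L_{\partial\bC}$ is a linear subspace $V$ of $\R^n$. Setting $k=\dim V$ and writing $\R^n=W\oplus V$ with $W\cong\R^{n-k}$, the $V$-invariance of each chamber gives $\bC(h)=\bC'(h)\times V$ with $\bC'(h)=\bC(h)\cap W$, and $\bC'=\{\bC'(h)\}$ inherits cone-likeness from $\bC$.

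The remaining point is that $\bC'$ inherits $(0,\infty)$-minimality in $\R^{n-k}$. Given a putative competitor $\cF'$ improving $P(\bC';\cdot)$ by $-\delta<0$ inside some ball $B^{n-k}_{\rho}(y)\subset W$, I would promote it to a competitor for $\bC$ inside the cylinder $B^{n-k}_{\rho}(y)\times B^{k}_{R}$ by taking $\cF(h)=\cF'(h)\times B^{k}_{R}$ and interpolating across $\partial B^{k}_{R}$. A Fubini-type slicing of the perimeter on the product shows the interior gain scales as $R^{k}\delta$ while the boundary interpolation costs at most $CR^{k-1}$, contradicting minimality of $\bC$ once $R$ is large. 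I expect this final product-minimality step to be the most delicate; once it is carried out, Lemma~\ref{l:cones} follows.
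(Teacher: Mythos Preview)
The paper states Lemma~\ref{l:cones} without proof; it is quoted as a standard structural fact about minimizing cones (in the spirit of the spine/symmetry lemmas in Almgren's and White's stratification theory), so there is no in-paper argument to compare against. Your proof plan is the classical one and is essentially correct: constancy of $M_0(\bC,x,\cdot)$ via the two-sided pinching, rigidity in Theorem~\ref{thm:monot} to get a second vertex, and the two-vertex $\Rightarrow$ translation-invariance step are all sound, and from there the linearity of $L_{\partial\bC}$ and the product splitting follow as you say.

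Two small remarks. First, the lemma as written omits the hypothesis that $\bC$ be $(0,\infty)$-minimizing (and contains the harmless typo $\R^{n+1}$ for $\R^n$); your argument correctly and necessarily uses minimality through Theorem~\ref{thm:monot}, so you should state that assumption explicitly. Second, the product-minimality step you flag as ``most delicate'' is indeed the only place requiring real care: your cylinder-competitor sketch with gain $\sim R^{k}\delta$ versus transition cost $\sim R^{k-1}$ is the right mechanism, but to make it a bona fide cluster competitor you must describe the interpolation region (e.g.\ a slab $B^{n-k}_\rho(y)\times\big(B^k_R\setminus B^k_{R-1}\big)$ where you cut between $\cF'\times\{\cdot\}$ and $\bC'\times\{\cdot\}$) and bound the added interface by the trace perimeters on $\partial B^{n-k}_\rho(y)$, which are controlled independently of $R$. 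Once that is written out, the proof is complete.
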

The previous lemma allows to split the singular set according to the dimension of the vector spaces $L_{\partial\bC}$, where $\bC$ is any tangent cone at $x$.
%

\begin{definition}[Stratification]
Given an almost minimizing cluster $\E$, we define for every $k\leq n$ the \textit{$k$-th stratum of $\partial \E$} by
\[
\Sigma^k(\partial \E):=\{x\in \sing(\partial \E)\,:\, \dim(L_{\partial \bC})\leq k\,\text{for every } \bC\in \Tan(x, \E)\}\,.
\] 
\end{definition}

Trivially the inclusion $\Sigma^k(\partial \E) \subset \Sigma^{k+1}(\partial \E)$ holds and it is known that for every $k=1,..., n$
\[
\dim_\cH(\Sigma^k(\partial \E))\leq k \quad \forall k=0,\dots,n\,,
\]
namely $\cH^{k+\alpha}(\Sigma^k(\partial \E))=0$ for every $\alpha>0$.

The highest stratum coincides with the set of regular points ${\rm Reg} \, \E$  and a posteriori it corresponds to the union of the reduced boundaries, in the sense of De Giorgi, of the chambers of $\E$. More precisely it consists in the set of boundary points where the blow-up is a couple of complimentary half-spaces, which in turn is a locally $C^{1,1}$ set in the context of almost minimizing clusters.

The following Theorem, due to Almgren \cite{Almgren68,Almgren76}, shows the existence and regularity of an area-minimizing cluster with volume constraint.
\begin{theorem}[Existence of isoperimetric clusters, regularity and almost minimality]\label{thm:noto-cluster}
	For every $m\in\R^N_+$ there exists a bounded isoperimetric cluster $\E$ of $\R^n$, namely a minimizer of \eqref{isoperimetric problem},
	and the diameter of $\E$ is uniformly bounded as soon as $m$ varies in a compact subset of  $\R^N_+$.
	Moreover, ${\rm Reg} \E$ is a finite union of analytic hypersurfaces with constant mean curvature in $\R^n$, which is relatively open in $\partial\E$ and the complement ${\rm Sing }\, \E = \partial\E\setminus {\rm Reg} \E$ satisfies the bound on the Hausdorff dimension
	$${\rm dim} ({\rm Sing }\, \E) \leq n-2.$$	
	Moreover, let $0<m_0<M_0$;  there exist $\Lambda,r_0>0$ depending only on $m_0, M_0$ such that each minimizer $\E$ of \eqref{isoperimetric problem} is a $(\Lambda,r_0)$-minimizer if $m \in [m_0, M_0]^N$.
\end{theorem}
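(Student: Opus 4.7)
The plan is to establish the four claims in the statement separately: existence, uniform diameter bound, $(\Lambda,r_0)$-almost minimality, and interior regularity.

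For existence I would apply the direct method to a minimizing sequence $\{\E_k\}$ with $m(\E_k)=m$: the uniform bound on $P(\E_k)$ allows one to extract, via $BV$ compactness, an $L^1_{\mathrm{loc}}$-convergent subsequence whose limit $\E^*$ satisfies $P(\E^*)\le \liminf_k P(\E_k)$ by lower semicontinuity of the perimeter. The real difficulty is escape of mass at infinity, which could make the volume constraint fail in the limit. I would handle this by Almgren's \emph{nucleation lemma}: in each chamber of positive volume there must sit a ball of radius bounded below by a function of $|\E_k(h)|$ alone, so by translating the pieces of each chamber one reduces to a sequence concentrated in a fixed ball. The \emph{uniform} diameter bound as $m$ varies in $[m_0,M_0]^N$ then comes from the standard truncation argument: setting $V(r):=\sum_h|\E(h)\setminus B_r|$, reabsorbing the far mass of each chamber into its bulk via a volume-preserving deformation yields a differential inequality of the form $V'(r)\le -c\,V(r)^{(n-1)/n}$, which forces $V\equiv 0$ past a scale depending only on $m_0,M_0$.

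The $(\Lambda,r_0)$-almost minimality is then a direct consequence of boundedness. Given a competitor $\mathcal F$ coinciding with $\E$ outside $B_r(x)$, with $r\le r_0$, the volume mismatch satisfies $|m(\mathcal F)-m(\E)|\le Cr^n$. Fix reference points $p_1,\dots,p_N$ inside the respective chambers of $\E$, far from $B_r(x)$ (possible because $\E$ is bounded and each chamber has volume at least $m_0>0$), and apply around them compactly supported diffeomorphisms which restore the prescribed volume vector. A standard computation shows that the perimeter cost of this correction is linear in $|m(\mathcal F)-m(\E)|$ and hence at most $C\Lambda r^n$, giving $P(\E)\le P(\mathcal F)+\Lambda r^n$.

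The regularity statement is the main obstacle and is Almgren's big regularity theorem. I would combine the monotonicity formula in Theorem~\ref{thm:monot} with an $\varepsilon$-regularity theorem near points whose unique tangent cone is a pair of complementary half-spaces: excess-decay \`a la De Giorgi--Reifenberg, adapted chamber by chamber, gives $C^{1,\alpha}$-regularity of $\partial\E$ at such points, and the Euler--Lagrange equation (which reads as constant mean curvature in each chamber, with Lagrange multipliers determined by the volume constraint) then bootstraps regularity to analytic. Relative openness in $\partial\E$ is automatic from the definition of $\mathrm{Reg}\,\E$, and finiteness of the number of analytic pieces follows from the uniform diameter bound and a covering argument. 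The dimension bound $\dim_{\cH}(\mathrm{Sing}\,\E)\le n-2$ I would establish by Federer's dimension reduction: if it failed, iteratively blowing up and splitting off lines would produce a nonplanar $(0,\infty)$-minimizing cone-like cluster in a low ambient dimension, contradicting the classification of minimizing cluster cones there (pairs of half-spaces are smooth, and the only other extremes — the standard triple junction and the tetrahedral cone — have purely $0$-dimensional singular set, strictly lower than the stratum being reduced).
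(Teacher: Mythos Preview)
The paper does not prove this theorem: it is quoted as background, attributed to Almgren \cite{Almgren68,Almgren76}, and the only further comment the authors make is that the almost-minimality clause ``follows by a contradiction argument employing the so called volume-fixing variations'', with a pointer to \cite[Proof of Theorem~1.10]{CiLeMaIC1}. So there is no ``paper's own proof'' to compare against beyond that one remark.

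Your sketch is a reasonable outline of the standard arguments, with two small caveats. First, the regularity part is \emph{not} ``Almgren's big regularity theorem'' (that phrase, and the reference \texttt{almgrenBIG} in the paper, denotes his higher-codimension work on area-minimizing currents); the cluster regularity here is the earlier \cite{Almgren68,Almgren76}. Second, your Federer reduction is slightly overcomplicated: to get $\dim(\mathrm{Sing}\,\E)\le n-2$ you only need that a minimizing cone splitting off $\R^{n-1}$ is a hyperplane (the residual factor lives in $\R^1$), so invoking the triple-junction and tetrahedral classifications is unnecessary at this stage. On the almost-minimality, your direct construction with reference points $p_1,\dots,p_N$ and compactly supported diffeomorphisms is one standard route; the paper instead alludes to the contradiction version (assume the constant blows up along a sequence of minimizers with $m\in[m_0,M_0]^N$, pass to a limit, and derive a contradiction from the volume-fixing variations available on the limit cluster). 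Both arguments hinge on the same volume-fixing mechanism and yield the same uniform $(\Lambda,r_0)$.
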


The less known part of the previous statement is perhaps the last sentence on almost-minimality, which follows by a contradiction argument employing the so called ``volume-fixing variations (for the sake of completeness, we mention that this short argument is presented in \cite[Proof of Theorem 1.10]{CiLeMaIC1}).

In dimension $n=2$, the only area-minimizing cone, up to isometries, is the triple junction, namely a set made by three disjoint sectors of 120 degrees, meeting at the origin.
As a consequence, the following classical result on the structure of any almost-minimizing cluster holds (see \cite[Theorem 30.7]{maggibook} or \cite[Theorem 5.2]{CiLeMaIC1}).
\begin{theorem}[Structure of isoperimetric clusters in $\R^2$]\label{thm:noto2}
If $\E$ is a $(\Lambda,r_0)$-minimizing cluster for some $\Lambda \in [0,\infty)$, $r_0 \in (0,\infty]$
in dimension $n=2$, ${\rm Reg}\, \E$ is a locally finite union of $C^{1,1}$-curves (with the diameter of each curve estimated from below by $1/2\Lambda$ as soon as the curve has empty boundary), the set $\Sigma^0({\partial \E})$ coincides with the whole singular set and is discrete. Finally, for each $x \in \Sigma^0({\partial \E})$ there exist exactly three $C^{1,1}$ curves, belonging to three different interfaces, which share x as one of their endpoints.
\end{theorem}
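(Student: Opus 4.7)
The plan is to combine three ingredients: the classification of area-minimizing cone-like clusters in $\R^2$, Almgren's regularity theorem (Theorem~\ref{thm:noto-cluster}), and an $\eps$-regularity theorem at triple junctions. First I would show that, up to isometry, the only cone-like $(0,\infty)$-minimizing clusters in $\R^2$ are the half-plane decomposition (a reduced-boundary point, density $1$) and the triple junction of three $120^\circ$ sectors (density $3/2$). This can be obtained directly using the monotonicity formula and a short variational comparison on the unit circle, replacing any candidate cone by the triple junction spanning its trace. For the triple-junction cone $\bC$, the set $L_{\partial \bC}$ defined in \eqref{e:subspace} is $\{0\}$: otherwise by Lemma~\ref{l:cones} one could split an $\R$-factor and reduce to a nontrivial $(0,\infty)$-minimizing cone in $\R^1$, which does not exist. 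Hence every tangent cone at a point of $\sing(\partial\E)$ satisfies $\dim L_{\partial\bC}=0$, yielding $\sing(\partial\E)=\Sigma^0(\partial\E)$.

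Next I would use Theorem~\ref{thm:noto-cluster} to get that ${\rm Reg}\,\E$ is a locally finite union of analytic curves of constant mean curvature. A standard calibration/comparison argument from $(\Lambda,r_0)$-minimality shows that the curvature of each such curve is bounded above by $\Lambda$. In dimension two, constant-curvature curves are arcs of circles (or segments when the curvature vanishes), so a connected regular curve with empty boundary must be a full circle of radius $\ge 1/\Lambda$, giving the diameter bound $\ge 2/\Lambda \ge 1/(2\Lambda)$ (provided the diameter is smaller than $r_0$, so that $\Lambda$-minimality can be invoked against a round-disk competitor).

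For the discreteness of $\Sigma^0(\partial\E)$ and the structure at each singular point, I would invoke an $\eps$-regularity theorem around triple junctions (of Taylor type): there exists $\eps=\eps(n,\Lambda r_0)>0$ such that, whenever $\partial\E$ is $\eps$-close in $L^1$ at some scale $r\le r_0$ to a triple junction centered at $x$, then $\partial\E\cap B_{r/2}(x)$ is a $C^{1,\alpha}$-graph over that triple junction consisting of three curves that meet at a single point of $\partial\E\cap B_{r/2}(x)$ with $120^\circ$ angles. By Theorem~\ref{thm:monot} and the classification step, at every $x\in \sing(\partial\E)$ the blow-ups converge along subsequences to a triple junction, so by compactness, for every $x$ there exists a scale $r$ at which the $L^1$-hypothesis is met. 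Applying $\eps$-regularity at each singular point gives that singularities are isolated, that they lie on exactly three interfaces meeting with the correct angles, and that the regular curves between them have a $C^{1,\alpha}$ extension to the endpoint; the $C^{1,1}$-improvement then follows by combining this with the analytic constant-mean-curvature structure of the regular part (in particular, each curve is a circular arc).

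The main obstacle is this last ingredient: the $\eps$-regularity at triple junctions is classical, going back to Taylor's work and revisited several times (see \cite[Ch.~30]{maggibook} and \cite{CiLeMaIC1} for detailed accounts), but it is not short to reprove. In a write-up I would therefore cite it rather than reprove it, and spend the actual effort on (i) verifying the classification of $2$-dimensional cones and (ii) checking the $L^1$-closeness hypothesis at each singular scale via the monotonicity formula.
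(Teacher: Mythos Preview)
The paper does not prove this statement at all: Theorem~\ref{thm:noto2} is quoted as a classical result, with references to \cite[Theorem~30.7]{maggibook} and \cite[Theorem~5.2]{CiLeMaIC1}, and no argument is given. Your outline is essentially the strategy followed in those references (classification of planar minimal cones, $\eps$-regularity at the triple junction, upper semicontinuity of the density), so in spirit you are reproducing the cited proof rather than diverging from the paper.

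That said, your sketch contains a genuine confusion that would make the write-up incorrect as it stands. Theorem~\ref{thm:noto-cluster} concerns \emph{isoperimetric} clusters, and its conclusions ``analytic hypersurfaces with constant mean curvature'' do not hold for a general $(\Lambda,r_0)$-minimizer; for the latter one only gets $C^{1,1}$ regularity of the interfaces (weak mean curvature bounded by $\Lambda$), which is exactly what Theorem~\ref{thm:noto2} asserts. Consequently your sentence ``constant-curvature curves are arcs of circles, so a connected regular curve with empty boundary must be a full circle of radius $\ge 1/\Lambda$'' is not available in this generality: the curves need not be circular arcs. The diameter lower bound for closed components is obtained instead by a direct competitor argument---delete the closed curve by merging the two adjacent chambers inside a ball of radius equal to the diameter and use the $(\Lambda,r_0)$-inequality---not by identifying the curve as a circle. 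Similarly, the $C^{1,1}$ (rather than merely $C^{1,\alpha}$) regularity up to the singular endpoints does not come from ``analytic constant-mean-curvature structure'' but from the $\Lambda$-bound on the distributional curvature of each interface. If you replace the appeal to Theorem~\ref{thm:noto-cluster} by the $C^{1,1}$ regularity theory for $(\Lambda,r_0)$-minimizing sets and adjust the diameter argument accordingly, the rest of your plan is sound.
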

From this theorem it follows easily that every minimizing cluster in problem \eqref{isoperimetric problem} in dimension $n=2$  is a finite union of open circular arcs (with nonempty boundary), meeting at triple junctions with angles of $120$ degrees.

In dimension $n=3$, it has been shown by Taylor \cite{taylor76} (see also \cite[Theorem 1.1]{CiLeMaIC2}) that the only cones locally minimizing the area-functional are, up to isometries: a reference closed cone $Y$ in $\R^3$ defined by three half-planes meeting along their common boundary line (which contains the origin of $\R^3$) by forming 120 degrees angles, and a reference closed cone $T$ in $\R^3$ spanned by edges of a regular tetrahedron and with vertex at the barycenter of the tetrahedron (which is assumed to be the origin of $\R^3$). 
Correspondingly,  at every singular point in ${\rm Sing} \, \E$ of an almost-minimizing cluster $\E$, we must have a unique blow-up, either of the type $Y$ or of the type $T$, and the structure of the singular set can be understood thanks to a epiperimetric inequality at triple junctions.
\begin{theorem}[Taylor's description of isoperimetric clusters in $\R^3$]\label{thm:noto3} Let $\Lambda \in [0,\infty), r_0 \in (0,\infty]$. There exists $\alpha \in (0,1)$ with the following property. If $\E$ is a $(\Lambda, r_0)$-minimizing cluster in $\R^3$ then the blow-up at each boundary point is unique and
$$
\Sigma^0(\partial \E)= \{ x \in \partial \E : \textit{ the blow-up of $\partial \E$ at $x$ is isometric to $T$} \},
$$
$$
\Sigma^1(\partial \E) \setminus \Sigma^0(\partial \E)= \{ x \in \partial \E : \textit{ the blow-up of $\partial \E$ at $x$ is isometric to $Y$} \}.
$$
Moreover $\Sigma^0(\partial \E)$ is locally finite, there exists a locally finite family $ S(\E) $ of closed connected topological surfaces with boundary in $\R^3$ and a locally finite family $\Gamma(\E)$ of closed connected $C^{1,\alpha}$-curves with boundary
such that
$S^*= S \setminus \Sigma^0(\E)$
is a $C^{1,\alpha}$-surface with boundary in $\R^3$ for every $S \in S(\E)$, 
$$ \partial \E = \bigcup_{S \in S(\E)} S, \qquad 
 {\rm Reg}\, \partial \E = \bigcup_{S \in S(\E)} {\rm int} (S^*), 
\qquad
\Sigma^0 (\partial \E) = \bigcup_{S \in S(\E)} {\rm bd} (S^*),
$$
$$\Sigma^1(\partial \E) = \bigcup_{\gamma \in \Gamma(\E)} {\rm int} (\gamma),
\qquad
\Sigma^0(\partial \E) = \bigcup_{\gamma \in \Gamma(\E)} {\rm bd} (\gamma).
$$
\end{theorem}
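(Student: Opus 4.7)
The plan is to combine the full classification of area-minimizing cones in $\R^3$ with epiperimetric-type inequalities at $Y$- and $T$-singularities to get uniqueness of blow-ups with a H\"older decay rate, from which both the identification of the strata and the $C^{1,\alpha}$ structure follow. The decomposition into the families $S(\E)$ and $\Gamma(\E)$ is then a topological bookkeeping corollary once the local picture is known.

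First, I would apply the stratification machinery together with Lemma~\ref{l:cones} and the low-dimensional classification of cone-like minimizing clusters: in $\R^1$ the only nontrivial one is a pair of complementary half-lines, in $\R^2$ the only one is the triple junction $Y_0$, and in $\R^3$ Taylor's cone theorem gives exactly the $Y$- and $T$-cones as singular possibilities. Points with tangent half-plane pair lie in $\reg \E$ by Almgren's $\eps$-regularity. Tangent cones with $\dim L_{\partial\bC}=1$ must split off an $\R$ and, by the $\R^2$ classification, be isometric to $Y_0\times \R\simeq Y$; tangent cones with $\dim L_{\partial\bC}=0$ must be isometric to $T$, since $T$ is the only $0$-symmetric minimizing cone in $\R^3$. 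This already gives the set-theoretic identifications
\[
\Sigma^1(\partial\E)\setminus\Sigma^0(\partial\E)\subseteq\{x:\Tan(x,\E)\subset\{Y\text{-cones}\}\},\qquad \Sigma^0(\partial\E)\subseteq\{x:\Tan(x,\E)\subset\{T\text{-cones}\}\},
\]
modulo showing that the tangent cone is unique.

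Uniqueness of blow-ups is obtained separately in the two singular cases. At a $Y$-type point, I would invoke the epiperimetric inequality at the triple junction (available through the classical arguments of Taylor, or alternatively the abstract White/Leon-Simon scheme), which turns the monotonicity formula of Theorem~\ref{thm:monot} into a geometric decay of the form
\[
M_\Lambda(\E,x,r)-\Theta_Y\;\le\;C r^{2\alpha},
\]
yielding uniqueness of the tangent cone at $x$ together with a $C^{1,\alpha}$ modulus of continuity of $x\mapsto$ tangent cone; iterating this on a neighborhood of $x$ gives that $\Sigma^1\setminus\Sigma^0$ is locally a $C^{1,\alpha}$ embedded arc with three $C^{1,\alpha}$ boundary sheets attaching at $120^\circ$. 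At a $T$-point the same scheme is run with the epiperimetric inequality at the $T$-cone, which yields uniqueness of the blow-up and the quantitative statement that $T$-points are isolated in $\partial\E$; combined with the regularity proved at $Y$-points, this gives that each sheet $S^*=S\setminus\Sigma^0$ is $C^{1,\alpha}$ up to its boundary in $\Sigma^0$.

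Finally, I would assemble the decomposition. Off $\Sigma^1$ the set $\reg\E$ is a disjoint union of relatively open analytic hypersurfaces by Theorem~\ref{thm:noto-cluster}; along each connected component of $\Sigma^1\setminus\Sigma^0$ three such sheets meet in a $C^{1,\alpha}$ arc; at each $T$-point exactly six $Y$-arcs and four sheets concur with the combinatorics of a regular tetrahedron. Defining $\Gamma(\E)$ as the maximal $C^{1,\alpha}$ arcs in $\overline{\Sigma^1}$ with endpoints in $\Sigma^0$ and $S(\E)$ as the topological closures of the connected components of $\reg\E$ gives the asserted identities; local finiteness follows from Theorem~\ref{thm:noto-cluster}, the uniform lower bound on the length of a $Y$-arc connecting two $T$-points (a consequence of the decay estimate and the $\eps$-regularity), and the local perimeter bound. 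The hard part of this program is the epiperimetric inequality at the $T$-cone: the $Y$-case and the stratification are by now standard, whereas the $T$-case is neither purely variational nor a perturbation of a simpler model, and Taylor's proof relies on an explicit construction of competitors that exploits the specific geometry of the tetrahedron; everything else in the theorem is a formal consequence once this estimate is in hand.
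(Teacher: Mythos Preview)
The paper does not contain a proof of this statement. Theorem~\ref{thm:noto3} is quoted in Section~\ref{sec:prelim} as a known background result, attributed to Taylor \cite{taylor76} (see also \cite[Theorem~1.1]{CiLeMaIC2}), and is used as a black box in the proof of Corollary~\ref{cor:cluster23}. There is therefore no ``paper's own proof'' to compare your proposal against.

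That said, your outline is a faithful sketch of the modern route to Taylor's theorem and is consistent with the one hint the paper gives (``the structure of the singular set can be understood thanks to an epiperimetric inequality at triple junctions''). A couple of minor remarks: the paper only mentions the epiperimetric inequality at $Y$-points, whereas you also invoke one at $T$; this is fine, but historically Taylor's original argument for isolatedness of $T$-points was a direct competitor construction rather than an abstract epiperimetric estimate, as you correctly flag in your last paragraph. Also, at a $T$-point four $Y$-arcs (not six) and six sheets meet, mirroring the four edges and six faces of the cone $T$; this is a bookkeeping slip that does not affect the logic of your argument.
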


A similar characterization of the singular set in dimension $4$ or more is currently an open problem, especially because we don't have any description of the singular cones.
For instance, an interesting problem would be to characterize the minimizing cones in $\R^4$ without symmetries; this would in turn describe the possible blow-ups of almost minimizing clusters. 

\section{Proofs}
Before proving Theorem~\ref{thm:main}, we show a covering lemma, that was previously employed in similar formulations in \cite{nava3,ghsp} and a proposition, showing that whenever there is a singular point in $\Sigma^{0,\Theta_0}(\partial \E)$ in a certain annulus around a given singular point in the same set, then the monotonicity formula drops of a fixed amount.
\begin{lemma}[Covering lemma]\label{lemma:covering}
	Let $X \subseteq B_{1/2} \subseteq \R^n$ be a collection of points and let $N\in \N$, $\lambda \in (0,1/5)$. Assume that for every $x\in X$ there exists a collection of scales $S_x \subseteq \N \cup\{0\}$ such that $|S_x| \leq N$ and
	\begin{equation}
	\label{hp:annulus}
	X \setminus \{x\} \subseteq \bigcup_{j\in S_x} B_{\lambda^j} \setminus B_{\lambda^{j+1}}.
	\end{equation}
	Then 
	\begin{equation}
	\label{ts:cover}
	\H^0(X) \leq \Big( \frac{ 10}{\lambda^2}\Big)^{nN}.
	\end{equation}
\end{lemma}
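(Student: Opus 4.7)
The plan is to proceed by induction on $N$. \textbf{Base case:} when $N=0$ the hypothesis forces $X\setminus\{x\}=\emptyset$ for every $x\in X$, so $\H^0(X)\leq 1=(10/\lambda^2)^0$. \textbf{Inductive step:} assume the bound with $N-1$ in place of $N$; if $|X|\leq 1$ the conclusion is trivial, so pick any $x_0\in X$ and enumerate $S_{x_0}=\{j_1<\dots<j_k\}$ with $k\leq N$. The hypothesis at $x_0$ decomposes
\[
X\setminus\{x_0\}=\bigsqcup_{i=1}^{k}X_i,\qquad X_i:=X\cap\bigl(B_{\lambda^{j_i}}(x_0)\setminus B_{\lambda^{j_i+1}}(x_0)\bigr),
\]
and for each $i$ I cover $X_i$ (which lies in a ball of radius $\lambda^{j_i}$) by a family $\{B_\alpha^{(i)}\}$ of at most $C(n)\,\lambda^{-2n}$ balls of radius $\lambda^{j_i+2}$ centered at points of $X_i$, via a standard volume-packing estimate.

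The geometric heart of the argument is a \emph{scale-elimination} observation within each covering ball. Fix $B_\alpha^{(i)}$ and $y\in B_\alpha^{(i)}\cap X_i$. Since $y\in B_{\lambda^{j_i}}(x_0)\setminus B_{\lambda^{j_i+1}}(x_0)$ one has $|y-x_0|\in[\lambda^{j_i+1},\lambda^{j_i})$, which forces $j_i\in S_y$ and shows that this scale of $y$ is already occupied by $x_0$. For any other $z\in B_\alpha^{(i)}\cap X$, the distance estimate
\[
|y-z|\leq 2\lambda^{j_i+2}=2\lambda\cdot\lambda^{j_i+1}<\lambda^{j_i+1},
\]
valid because $\lambda<1/5<1/2$, places $z$ inside $B_{\lambda^{j_i+1}}(y)$, and hence in an annulus around $y$ of scale strictly larger than $j_i$. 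Therefore within $B_\alpha^{(i)}$ only scales in $S_y\setminus\{j_i\}$ are needed, and after recentering (and a harmless shift of the scale labelling) the subset $X\cap B_\alpha^{(i)}$ satisfies the hypothesis of the lemma with $N-1$ in place of $N$.

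By the inductive hypothesis $\H^0(X\cap B_\alpha^{(i)})\leq(10/\lambda^2)^{n(N-1)}$, and summing over the $k\leq N$ annuli and the $\lesssim\lambda^{-2n}$ covering balls per annulus produces
\[
\H^0(X)\leq 1+N\,C(n)\,\lambda^{-2n}\,(10/\lambda^2)^{n(N-1)}.
\]
The main obstacle is to check that the generous constant $10$ in the statement is large enough to absorb simultaneously the dimensional packing constant $C(n)$ and the linear factor $N$ coming from summing over the $N$ annuli, i.e.\ that one can arrange $1+N\,C(n)\,\lambda^{-2n}\leq(10/\lambda^2)^n$ at every step of the induction. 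This is a matter of a careful choice of the covering radius (here $\lambda^{j_i+2}$, which makes the $\lambda^{-2n}$ factor appear explicitly) and of a tight dimensional packing estimate; the entire geometric content of the lemma is however already captured in the scale-elimination observation above, made possible by the assumption $\lambda<1/5$.
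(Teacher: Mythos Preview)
Your scale-elimination observation is correct and is indeed the geometric heart of the argument, but the induction as you set it up does \emph{not} close, and the obstacle you flag at the end is genuine rather than cosmetic. After summing over the $k\le N$ annuli of $x_0$ and over the covering balls you obtain
\[
\H^0(X)\le 1+N\cdot C(n)\,\lambda^{-2n}\,(10/\lambda^2)^{n(N-1)},
\]
and to close the induction you would need $1+N\,C(n)\,\lambda^{-2n}\le(10/\lambda^2)^n=10^n\lambda^{-2n}$, hence essentially $N\,C(n)\le 10^n$. Since $N$ is arbitrary this fails once $N$ exceeds a dimensional threshold; no sharpening of the packing constant or change of covering radius removes the linear factor $N$, which comes from the decision to treat the $N$ annuli of a fixed point $x_0$ separately.

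The paper's proof circumvents exactly this point. Instead of fixing $x_0$ and splitting into its $N$ annuli, it argues the contrapositive and uses the \emph{diameter} of $X$: if $d=\diam X\in[\lambda^{k+1},\lambda^k)$, then for \emph{every} $x\in X$ one of the two annuli at scale $k$ or $k+1$ around $x$ is nonempty. One then covers the entire set $X\subseteq B_{\lambda^k}(x_0)$ (not a single annulus) by $(10/\lambda^2)^n$ balls of radius $\lambda^{k+2}/2$ and applies pigeonhole to pass to a single ball containing more than $(10/\lambda^2)^{n(N-1)}$ points. The inductive hypothesis produces a point with $>N-1$ occupied annuli at scales $\ge k+2$, and the diameter observation supplies one further occupied annulus at scale $k$ or $k+1$. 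The diameter trick replaces your sum over $N$ annuli by a single pigeonhole step, which is precisely what eliminates the spurious factor $N$.
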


\begin{proof}
	The thesis is equivalent to prove that, given a set $X \subseteq B_{1/2}$ with more than $(10 \lambda^{-2})^{nN}$ elements, there exists an element $x\in X$ such that the number of annulai occupied by points in $X$ is strictly greater than $N$, namely
	$$ \H^0 \big(\big\{i\in \N: X \cap \big(B_{\lambda^{i}}(x) \setminus B_{\lambda^{i+1}}(x) \big) \neq \emptyset \big\} \big)>N.$$
	We prove this statement by induction on $N$. If $N =0$ the statement holds. Let us assume that the statement holds for $N-1$ and let us prove it for $N$. 
	Let $d$ be the diameter of $X$, namely $d:= \sup_{x,y \in X} |x-y|$; it follows that
\begin{equation}
\label{simo}
X \cap \big( B_d(x) \setminus B_{d/2} (x) \big) \neq \emptyset \qquad \mbox{for every }x\in X.
\end{equation}
	If $d \in [\lambda^{k+1}, \lambda^{k})$ for some $k\in \N$ we deduce that $ d/2 \in [\lambda^{k+2}, \lambda^{k})$. From \eqref{simo}, for every $x\in X$ at least one of the two annulai $B_{\lambda^{k}}(x)\setminus B_{\lambda^{k+1}}(x)$ and $B_{\lambda^{k+1}}(x)\setminus B_{\lambda^{k+2}}(x)$ contains an element of $X$:
	\begin{equation}
	\label{eqn:alternativa}
	\big(B_{\lambda^{k}}(x)\setminus B_{\lambda^{k+1}}(x)\big) \cap X \neq \emptyset \quad\mbox{or} \quad
	\big(B_{\lambda^{k+1}}(x)\setminus B_{\lambda^{k+2}}(x)\big) \cap X \neq \emptyset \quad \mbox{for every } x\in X.
	\end{equation}
	Let $x_0$ be an element of $X$ and, thanks to Lemma~\ref{lemma:cov-palla} let us cover $B_{\lambda^{k}}(x_0)$ with $(10 \lambda^{-2})^n$ balls of radius $\lambda^{k+2}/2$. Since $\lambda^{k}$ is greater than or equal to the diameter of $X$, we see that $X \subseteq B_{\lambda^{k}}(x_0)$; therefore, one of these balls, denoted by $\overline B$, contains more than $(10 \lambda^{-2})^{n(N-1)}$ points. Applying the inductive assumption on $\overline B \cap X$ after rescaling it by $\lambda^{k+2}$, we know that there exists a point $x\in X$ such that 
	$$ \H^0 \big(\big\{i\geq k+2: X \cap \big(B_{\lambda^{i}}(x) \setminus B_{\lambda^{i+1}}(x) \big) \neq \emptyset \big\} \big)>N-1.$$
	Since \eqref{eqn:alternativa} holds, we know that at least another annulus is occupied and we have proved the inductive statement.
\end{proof}

\begin{lemma}\label{lemma:cov-palla}
	For every $\mu \in (0,1)$ there exists a covering of $B_1 \subseteq \R^d$ made by $(5 \mu^{-1})^n$ balls of radius $\mu$.
\end{lemma}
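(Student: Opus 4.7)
The plan is to use a standard maximal packing/covering duality together with a volume comparison, which for the constant $5$ leaves a very comfortable margin.

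First I would select a maximal $\mu$-separated subset $\{x_1,\dots,x_N\} \subseteq B_1$, i.e.\ a maximal collection of points with $|x_i-x_j|\geq \mu$ for $i\neq j$. Such a maximal collection exists by Zorn's lemma (or, more concretely, by greedy construction, since $B_1$ is bounded and the separation constraint is uniform). The key consequence of maximality is that the balls $\{B_\mu(x_i)\}_{i=1}^N$ cover $B_1$: if some $y\in B_1$ were not covered, then $\{x_1,\dots,x_N,y\}$ would still be $\mu$-separated, contradicting maximality. This gives a covering of the desired radius; it only remains to bound $N$.

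For the bound, I would apply the classical disjointness trick: the balls $\{B_{\mu/2}(x_i)\}_{i=1}^N$ are pairwise disjoint by the $\mu$-separation, and each one is contained in $B_{1+\mu/2}$, hence in $B_{3/2}$ since $\mu\in(0,1)$. Taking Lebesgue measure and dividing by $\omega_n$ (the volume of the unit ball in $\R^n$) yields
\[
N\Bigl(\frac{\mu}{2}\Bigr)^{n} \;\leq\; \Bigl(\frac{3}{2}\Bigr)^{n},
\]
so that $N \leq (3/\mu)^n \leq (5/\mu)^n$. Combined with the covering property from the previous paragraph, this produces the desired covering of $B_1$ by at most $(5\mu^{-1})^n$ balls of radius $\mu$.

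There is no real obstacle here; the only mild point to verify is that a maximal $\mu$-separated set in $B_1$ is finite, but this is immediate from the volume comparison above, which in fact gives the quantitative bound on $N$ we need. (I read the ``$d$'' appearing in the statement as a typo for $n$, since the conclusion is written with exponent $n$.)
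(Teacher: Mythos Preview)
Your argument is correct and follows essentially the same route as the paper: both proofs produce a covering by radius-$\mu$ balls whose shrunken copies are pairwise disjoint, and then bound the number of balls by a volume comparison. The paper phrases the construction via the Vitali covering lemma (with disjoint balls $B_{\mu/5}(x_i)\subseteq B_1$, giving $N\le(5/\mu)^n$ directly), whereas you use a maximal $\mu$-separated set (with disjoint balls $B_{\mu/2}(x_i)\subseteq B_{3/2}$, giving the slightly sharper $N\le(3/\mu)^n$); the two are standard equivalent implementations of the same idea, and your observation about the $d$/$n$ typo is apt.
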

\begin{proof}
 We apply the Vitali covering lemma: for every $x \in B_{1-\mu/5}$ we consider $B_\mu(x)$ and we extract a Vitali subcovering, namely a finite number of centers $x_i$, indexed by $i\in I$, such that the balls $B_{\mu/5}(x_i)$ are disjoint and the balls $B_{\mu}(x_i)$ cover $B_1$
The cardinality of $I$, that we denote with $N$, must satisfy
$N |B_{\mu/5}| \leq |B_1|$
since the balls $B_{\mu/5}(x_i)$ are disjoint and contained in $B_1$; this implies that $N \leq |B_1| |B_{\mu/5}|^{-1} = (5\mu^{-1})^n$.
\end{proof}
\begin{remark}
	The exponential dependence on $N$ in the estimate \eqref{ts:cover} of Lemma~\ref{lemma:covering} is optimal. Indeed, we can perform the following construction in $\R$. Let $N$ be a natural number and $\lambda = 1/4$. For every $e_1, ..., e_{N} \in \mathbb Z /  2\mathbb Z$, consider the point $p(e_1, ..., e_{N})= \sum_{i=1}^{N} (-1)^{e_i}2^{-12i} \in \R$.
	The collection consists of $2^{N}$ distinct points. We show that each point has at most $N$ dyadic annulai occupied by other points, by proving the following claim. 
	Let $\overline e_1 , ... ,\overline e_{N} \in \mathbb Z /  2\mathbb Z$ and let $j \in \{1,..., N\}$; then the points of the form
	$$ p(\overline e_1 , ... ,\overline e_{j-1} , \overline e_{j} +1, e_{j+1}... , e_{N} )$$ as $e_{j+1},..., e_{N}$ vary belong to $B_{2^{-12j+2}} (p({\overline e_1 , ... ,\overline e_{N}}))\setminus B_{2^{-12j}} (p({\overline e_1 , ... ,\overline e_{N}})) $, so that they belong to exactly one dyadic ring.
	
	Indeed, we have that
	\begin{equation}
	\begin{split}
	| p({\overline e_1 , ... , \overline e_{j-1} , \overline e_{j} +1, e_{j+1}... , e_{N} }) 
	&-
	p({\overline e_1 , ... ,\overline e_{N}}) -2 (-1)^{\overline{e}_j}2^{-12j})|
	\\&= 
	\Big |
	\sum_{k=j+1}^{N} (-1)^{\overline e_{k}}  2^{-12k} - 
	\sum_{k=j+1}^{N} (-1)^{ e_{k}}  2^{-12k}
	\Big|
	\\&
	\leq 2 \sum_{k=j+1}^{N}  2^{-12k}  \leq  2 \cdot 2^{-12 j} \sum_{k=1}^{\infty} 2^{-12k}\leq \frac 1 4 \cdot 2^{-12 j} 
	\end{split}
	\end{equation}
	and consequently
	$$2^{-12j} \leq \frac 7 4 \cdot 2^{-12 j}	\leq | p({\overline e_1 , ... , \overline e_{j-1} , \overline e_{j} +1,... , e_{N} }) 
	-	p({\overline e_1 , ... ,\overline e_{N}})|
	\leq\frac 9 4 \cdot 2^{-12 j}	\leq  2^{-12j+2}.
	$$
	
\end{remark}

\begin{proposition}\label{prop:quant-discreteness}
	Let $n,N\in \N$, $0< r_0, \Lambda$, $\Theta_0 \in (1,\infty)$ $\E$ be a $(\Lambda, r_0)$-minimizing cluster. Then there exist $\delta, \lambda \in [0,1/8]$, $r_1 \leq r_0$  (depending only on $n, N, \Lambda$ and $r_0$) such that if $x\in \Sigma^{0, \Theta_0}({\partial \E})$, $r \leq r_1$ and
$$M_\Lambda(\E, x,r) - M_\Lambda(\E, x, 4\lambda^2 r) \leq \delta,$$
then 
$$\Sigma^{0, \Theta_0}(\partial \E) \cap (B_{r/2}\setminus B_{\lambda r}) = \emptyset.$$
\end{proposition}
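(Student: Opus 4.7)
The approach is a classical blow-up/compactness/rigidity argument; I fix $\lambda := 1/8$ so that $4\lambda^2 < \lambda$. Suppose the conclusion fails: there exist $(\Lambda, r_0)$-minimizing clusters $\E_k$, points $x_k \in \Sigma^{0, \Theta_0}(\partial \E_k)$, radii $r_k \to 0$, constants $\delta_k \to 0$, and points $y_k \in \Sigma^{0, \Theta_0}(\partial \E_k) \cap (B_{r_k/2}(x_k) \setminus B_{\lambda r_k}(x_k))$ with $M_\Lambda(\E_k, x_k, r_k) - M_\Lambda(\E_k, x_k, 4\lambda^2 r_k) \leq \delta_k$. Translating $x_k$ to the origin and dilating by $r_k$, the clusters $\tilde \E_k := (\E_k - x_k)/r_k$ are $(\Lambda r_k, r_0/r_k)$-minimizing with vanishing almost-minimality constant, and $\tilde y_k := (y_k - x_k)/r_k \in B_{1/2} \setminus B_\lambda$. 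By the standard compactness for almost-minimizing clusters (convergence in $L^1_{\rm loc}$ together with Hausdorff convergence of boundaries on compact sets), up to subsequence $\tilde \E_k \to \E_\infty$ to a $(0, \infty)$-minimizing cluster, and $\tilde y_k \to y_\infty \in \partial \E_\infty$ with $\lambda \leq |y_\infty| \leq 1/2$.

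Next I would identify the densities at $0$ and at $y_\infty$, and extract a cone structure. The monotonicity formula gives $M_{\Lambda r_k}(\tilde \E_k, z, s) \geq \Theta_0 \omega_{n-1}$ for $z \in \{0, \tilde y_k\}$ and every $s$ in the (growing) admissible range. Since $P(\tilde \E_k; B_s(z)) \to P(\E_\infty; B_s(z))$ for all but countably many $s$ (by the Hausdorff convergence of boundaries) and $\Lambda r_k \to 0$, passing to the limit in $k$ and then $s \to 0^+$ gives $\Theta_{\partial \E_\infty}(0), \Theta_{\partial \E_\infty}(y_\infty) \geq \Theta_0$. By \eqref{def:theta_0} no tangent cone of $\E_\infty$ has density exceeding $\Theta_0$, so these inequalities are equalities. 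The same limiting argument applied to the drop hypothesis yields $M_0(\E_\infty, 0, 1) = M_0(\E_\infty, 0, 4\lambda^2)$, so by the equality case in Theorem~\ref{thm:monot} the cluster $\E_\infty$ is a cone centered at $0$ on $B_1 \setminus B_{4\lambda^2}$, and $r \mapsto M_0(\E_\infty, 0, r)$ is constant equal to some value $c$ on $[4\lambda^2, 1]$.

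To conclude, I would globalize this cone structure and apply Lemma~\ref{l:cones}. On one hand, monotonicity and $\lim_{r \to 0^+} M_0(\E_\infty, 0, r) = \Theta_0 \omega_{n-1}$ give $c \geq \Theta_0 \omega_{n-1}$. On the other hand, the blow-down of $\E_\infty$ at infinity is a $(0, \infty)$-minimizing cone-like cluster whose density at $0$ equals $\lim_{r \to \infty} M_0(\E_\infty, 0, r)/\omega_{n-1} \geq c/\omega_{n-1}$ and is at most $\Theta_0$ by \eqref{def:theta_0}; hence $c = \Theta_0 \omega_{n-1}$, and monotonicity forces $M_0(\E_\infty, 0, r) \equiv \Theta_0 \omega_{n-1}$ on $(0, \infty)$. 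By the equality case of Theorem~\ref{thm:monot}, $\E_\infty$ is a cone-like $(0, \infty)$-minimizer centered at $0$. Since $\Theta_{\partial \E_\infty}(y_\infty) = \Theta_{\partial \E_\infty}(0) = \Theta_0$ and $y_\infty \neq 0$, the set $L_{\partial \E_\infty}$ from \eqref{e:subspace} contains $y_\infty$, and Lemma~\ref{l:cones} identifies it as a linear subspace of $\R^n$ of dimension $k \geq 1$. Thus $\E_\infty$ is a cone-like minimizing cluster with at least one symmetry and density $\Theta_0$ at the origin, contradicting \eqref{hp:theta_0}.

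\textbf{Main obstacle.} The delicate point is the density-transfer step: one needs both the $L^1_{\rm loc}$/Hausdorff convergence of $(\Lambda, r_0)$-minimizing clusters after rescaling (with $\Lambda r_k \to 0$) and the compatibility of the monotonicity formula with this convergence, so as to read off the limit densities at $0$ and $y_\infty$ and to pass the drop hypothesis to the limit. With this in hand, the rigidity in the equality case of Theorem~\ref{thm:monot}, the product structure in Lemma~\ref{l:cones}, and the strict density gap in \eqref{hp:theta_0} close the argument in a clean way.
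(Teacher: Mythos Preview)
Your argument is correct and follows the same blow-up/compactness/rigidity scheme as the paper, but the technical execution differs in a way worth noting. You fix $\lambda = 1/8$, rescale by $r_k$, and then use a blow-down argument together with the maximality of $\Theta_0$ in \eqref{def:theta_0} to force $M_0(\E_\infty,0,\cdot)\equiv \Theta_0\omega_{n-1}$ and hence conclude that $\E_\infty$ is a \emph{global} cone; the contradiction then comes directly from \eqref{hp:theta_0}. The paper instead lets $\lambda_k\to 0$ and rescales by $|x_k|$ so that the second singular point lands on $\partial B_1$; since $4\lambda_k^2 r_k/|x_k|\to 0$ while $r_k/|x_k|\ge 2$, the drop hypothesis alone already yields $M_0(\E_\infty,0,2)=M_0(\E_\infty,0,0^+)$, so $\E_\infty$ is conical in $B_2$ without invoking the blow-down or the maximality of $\Theta_0$, and one further blow-up at the origin globalizes it. Your route is a bit more streamlined (single rescaling, no need to send $\lambda\to 0$), while the paper's route makes the role of the drop hypothesis more explicit; both reach the same contradiction via Lemma~\ref{l:cones} and \eqref{hp:theta_0}. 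As a minor remark, in your argument the passage through the intermediate equality $M_0(\E_\infty,0,1)=M_0(\E_\infty,0,4\lambda^2)$ is in fact redundant: once you know $\Theta_{\partial\E_\infty}(0)=\Theta_0$ and the blow-down has density at most $\Theta_0$, monotonicity already forces $M_0(\E_\infty,0,\cdot)$ to be constant on all of $(0,\infty)$.
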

The previous proposition quantifies the fact that the singular set  $ \Sigma^{0, \Theta_0}({\partial \E})$ is discrete: indeed, as it is shown in the Proof of Theorem~\ref{thm:main}, by the monotonicity formula and by Proposition~\ref{prop:quant-discreteness} for every $x\in \Sigma^{0, \Theta_0}({\partial \E})$
the number of annulai of the form $B_{r_1(4\lambda)^n} (x) \setminus  B_{r_1(4\lambda)^{n-1}}(x)$ which intersect   $ \Sigma^{0, \Theta_0}({\partial \E})$ is finite.
\begin{proof}
Assume by contradiction that there exist sequences $r_k \to 0$, $\lambda_k \to 0$, and a sequence of $(\Lambda, r_0)$-minimizing clusters $\E_k$ such that $0\in \Sigma^{0, \Theta_0}({\partial \E_k})$
\begin{equation}
	\label{eqn:cone}
	\lim_{k\to \infty} \big( M_\Lambda( \E_k, 0,r_k) - M_\Lambda(\E_k, 0, 4\lambda^2_k r_k) \big)= 0,
\end{equation}
and for every $k$ there exists a singular point $x_k \in \Sigma^{0,\Theta_0}({\partial \E_k}) \cap (B_{r_k/2}\setminus B_{\lambda_k r_k})$.
We consider the rescaled clusters $\E_k/|x_k|$ and, up to a subsequence, we assume that $\lim_{k\to \infty} x_k / |x_k| = x \in \partial B_1$ and that $\E_k/|x_k|$ converges to a generalized cluster $\E_{\infty}$, minimizing in the whole space. 
Moreover we have that the perimeter of the rescaled clusters in any ball is bounded independently on $k$
$$P\big((\E_k / |x_k|) \cap B_r\big)\leq N \omega_{n-2}r^{n-1}+ (n-1)\omega_{n-1}r^{n-1} +\Lambda r^n \qquad \mbox{for every $r>0$.}$$ Indeed, we can build a competitor for $\E_k / |x_k|$ in $B_r$ obtained by putting $(N-1)$ vertical discs splitting $B_r$ in $N$ parts such that the $i$-th part has the same volume as the $i$-th chamber of $\E$ in $B_r$. 

 Moreover, again by minimality of each $\E_k$ we have that no perimeter is lost when taking the limit as $k \to \infty$ in some fixed $B_r$: in other words, for every $r>0$ we have that 
$$ \lim_{k \to \infty}M_{\Lambda|x_k|}( \E_k/|x_k|, 0,r) =  M_0( \E_\infty, 0,r).$$ By the upper semicontinuity of the density,  we know that the point $x$ must have at least density $\Theta_0$; since $\Theta_0$ is the maximal density of an area-minimizing cone-like cluster, the density is exactly $\Theta_0$. In particular, this limit point still belongs to the $0$-stratum for the limit cluster $\E_{\infty}$. 

Since $\lambda_k r_k \leq |x_k| \leq r_k/2$ for every $k\in \N$, we have
\begin{equation}
\label{eqn:dicot1}
\lim_{k \to \infty} \frac{\lambda_k^2 r_k}{|x_k|} = 0, \qquad \frac{ r_k}{|x_k|} \geq 2.
\end{equation}
Hence, by \eqref{eqn:cone} and by the monotonicity of $M_{\Lambda |x_k|}( \frac{\E_k}{|x_k|}, 0, \cdot)$ 
 we know that
\begin{equation}
\begin{split}
0&= \lim_{k\to \infty} M_\Lambda( \E_k, 0,r_k) - M_\Lambda(\E_k, 0, 4\lambda_k^2 r_k) 
\\& =  \lim_{k\to \infty} \Big[ M_{\Lambda |x_k|}\Big( \frac{\E_k}{|x_k|}, 0,\frac{r_k}{|x_k|}\Big) - M_{\Lambda |x_k|}\Big(\frac{\E_k}{|x_k|}, 0, \frac{4\lambda_k^2 r_k}{|x_k|}\Big) \Big]
\\& \geq  \lim_{k\to \infty} \Big[ M_{\Lambda |x_k|}\Big( \frac{\E_k}{|x_k|}, 0,2\Big) - M_{\Lambda |x_k|}\Big(\frac{\E_k}{|x_k|}, 0, \frac{4\lambda_k^2 r_k}{|x_k|}\Big) \Big]
\\& =   M_0( \E_\infty, 0, 2) - M_0(\E_\infty, 0, 0)
\end{split}
\end{equation}
(we denote with $M_0(\E_\infty, 0, 0)$ the density at $0$, namely $\inf_{r>0} M_0(\E_\infty, 0, r) \geq 2$
).
Therefore, the previous quantity is exactly $0$ and the cluster $ \E_\infty$ is a cone-like cluster in $B_1$, again by the monotonicity formula in Theorem~\ref{thm:monot}. Hence the whole segment $x[0,2]$ is a singular line for the limit cluster with at least the same density as the origin. Then, we blow up the cluster $\E_\infty$ at $0$, finding a new cone-like cluster $\E_\infty'$ which coincides with $\E_{\infty}$ in $B_2$ and with the whole line $x\R_+$ as a singular line with density greater or equal than the one at the origin. So the cone-like cluster $\E_\infty'$ splits as a product with a factor that is given by the line $x\R$ (by Lemma~\ref{l:cones}), and $0$ cannot be a singularity of type $\Sigma^{0}({\partial \E_\infty})$.
%
\end{proof}

\begin{proof}[Proof of Theorem~\ref{thm:main}] By scaling the cluster $\E$ of a factor $r_0$ and by translation, we reduce to the case $r_0= 1 $, namely we want to prove that if $\E$ is a $(\Lambda, 1)$-minimizing cluster there exist $C:= C(n, \Lambda)$,  $r_1:=r_1(n, \Lambda)$ such that for every $R\in (0, r_1/2)$
	\begin{equation}\label{ts-rescaled}
	\H^{0} \big(\Sigma^{0, \Theta_0}(\partial \E)\cap B_R(0)\big) \leq C^{1+\frac{P(\E; B_{R}(0))}{R^{n-1}}}.
	\end{equation}

	Let $\delta, \lambda \in [0,1/8]$, $r_1 \leq 1$ be as in Proposition~\ref{prop:quant-discreteness} (depending only on $n$ and $\Lambda$) and let $R\leq r_1/2$.
	We apply the result with $r=2R (2\lambda)^n$; we deduce that if for some $n \in \{0,1,...\}$ and $x\in \Sigma^{0, \Theta_0}({\partial \E})$
	$$\Sigma^{0, \Theta_0}(\partial \E)\cap \big( B_{R(2\lambda)^n} (x)\setminus  B_{R(2\lambda)^{n+1}} (x) \big) \neq \emptyset, 
	$$
	then
	\begin{equation}
	\label{eqn:drop}	
M_{\Lambda}(\E, x,2R (2 \lambda)^n) - M_{\Lambda}(\E, x,  2R (2 \lambda)^{n+2}) \geq \delta.
	\end{equation}
	Let us call $N_x$ the number of integers $n=0,1,...$ such that the condition in \eqref{eqn:drop} is satisfied. By applying \eqref{eqn:drop} for every $n\geq 0$, we have that
	\begin{equation}
	\label{eqn:nx}
	\delta N_x \leq
	\sum_{n =0} ^\infty M_\Lambda(\E, x,2R (2 \lambda)^n) - M_\Lambda(\E, x, 2 R  (2 \lambda)^{n+2}),
	\end{equation}
	so that in particular the set  $\Sigma^{0, \Theta_0}({\partial \E})$ is discrete. 
	Since  by Theorem~\ref{thm:monot} the quantity $M(\E, x, \cdot)$ is monotonically increasing, and since the intervals in the set $\{ [2R (2 \lambda)^{n+2}, 2R (2\lambda)^n) : n \mbox{ odd} \}$ are all disjoint, and the same holds for $\{ [2R (2 \lambda)^{n+2}, 2R (2\lambda)^n) : n \mbox{ even} \}$, we deduce that
	$$ \sum_{ n\, \textit{even}} M_\Lambda(\E, x,2R (4 \lambda)^n) - M_\Lambda(\E, x,  2R \lambda (4 \lambda)^n) \leq M_\Lambda(\E, x,R),$$
	$$ \sum_{ n\, \textit{odd}} M_\Lambda(\E, x,2R (4 \lambda)^n) - M_\Lambda(\E, x,  2R \lambda (4 \lambda)^n) \leq M_\Lambda(\E, x,R)$$
	and by \eqref{eqn:nx}
	$$ N_x \leq\frac{2 M_\Lambda(\E, x,R)}{\delta} \leq \frac{2e^\Lambda P(\E; B_{2R}(x_0))}{\delta R^{n-1}}.
	$$
	Therefore, we apply Lemma~\ref{lemma:covering} to the rescaled set of points $ \big(\Sigma^{0, \Theta_0}(\partial \E)\cap B_{R}(0) \big) /(2R)$ and with $N ={2e^\Lambda P(\E; B_{2R}(x_0))}/{\delta R^{n-1}}$ 
	to deduce that, for every $x_0 \in \R^n$, \eqref{ts-rescaled} holds with a constant $C$ depending only on $n $ and $\lambda$.
	
\end{proof}

\begin{proof}[Proof of Corollary ~\ref{cor:cluster23}]
	By Theorem~\ref{thm:noto-cluster}, we know that there exists a bound on the diameter of the isoperimetric cluster solving problem \eqref{isoperimetric problem} with volume constraint $m \in [m_0, M_0]$.
	and that there exist $\Lambda,r_0$ such that each volume-constrained minimizer $\E$ is also $(\Lambda,r_0)$-minimizing.
	By Theorem~\ref{thm:main} there exists $r_1>0$ such that in each ball of radius $r_1$ there is only a quantified number of triple junctions in dimension $n=2$ (resp. tetrahedral points in dimension $n=3$); covering the cluster with balls of radius $r_1$ and using the bound on the diameter, we obtain \eqref{ts:quant-cluster}.
	
	Let us now fix $n=2$ and let us work in the context of the structure Theorem~\ref{thm:noto2}. Since the number of triple junctions is finite, there exists only a finite number of equivalence classes when we see $\partial \E$ as a non-oriented graph and the classes are according to graph homeomorphisms (namely, bijective maps sending vertices to vertices and edges between two given vertices in the  edge between corresponding vertices). Each graph homeomorphism between the graphs corresponding to $\partial \E $ and $\partial \E'$ can be moreover extended to a homeomorphism between $\partial \E $ and $\partial \E'$ by parameterizing each edge by arc-length.
	
	When $n=3$ (see Theorem~\ref{thm:noto3}), we notice that the density of tetrahedral points is strictly bigger than the density at triple junctions. Indeed, the second one equals $3/2$, as for the corresponding singularity in $\R^2$, by coarea formula. The first one can be computed by noticing that the side of a tetrahedron inscribed in the unit circle is $2^{3/2} 3^{-1/2}$ and that the angle at the circumcenter in a triangle made with two vertices of the tetrahedron is $2 \arccos(2^{1/2} 3^{-1/2})$, so that the density of the tetrahedron is $12 \arccos(2^{1/2} 3^{-1/2}) \pi^{-1} \simeq 2,3096$.
In this context, we consider graph homeomorphisms which preserve not only the edges (as in the two dimensional case), but also the faces between a certain subset of vertices.
	Let us consider one of the finitely many equivalence classes (up to these graph homeomorphisms). 
	Finally, by the Jordan curve theorem applied to the continuous curve that bounds a face in $\partial \E$, we can build an homeomorphism between this face and the unit disk that agrees with the arc-length parametrization on the boundary. Doing it also to the corresponding face of the cluster $\E'$, we can compose the two homeomorphisms to obtain a continuous, invertible map between the corresponding faces. Repeating this procedure on each face, we obtain an homeomorphism between $\partial \E$ and $\partial \E'$. 	
\end{proof}

The following proposition exploits a simplified version of Proposition~\ref{prop:quant-discreteness} in the context of area-minimizing hypersurfaces in $\R^8$.
\begin{proposition}\label{prop:quant-discreteness-8}
	Let $E$ be a set locally minimizing the perimeter functional in $\R^8$. 
	Then there exist universal constants $\delta, \lambda \in [0,1/8]$ such that if $x\in {\rm Sing} E$, $r>0$ and
	$$\frac{P(E; B_r)}{r^{n-1}} - \frac{P(E; B_{4\lambda^2 r})}{(4\lambda^2 r)^{n-1}} \leq \delta,$$
	then 
	$${\rm Sing}(E) \cap (B_{r/2}\setminus B_{\lambda r}) = \emptyset.$$
\end{proposition}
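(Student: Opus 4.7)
The plan is to imitate the contradiction argument of Proposition~\ref{prop:quant-discreteness}, exploiting two simplifications available in the codimension $1$ setting in $\R^8$: perimeter minimality is exactly scale invariant, so one can normalize $r=1$; and the Federer dimension bound yields $\dim_\cH({\rm Sing}\,\partial F)\le 0$ for every local minimizer $F\subset \R^8$, so in the blow-up one only needs to produce a one dimensional piece of the singular set to reach a contradiction.

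I first set up the contradiction. Assuming the proposition fails, I extract sequences $\delta_k,\lambda_k\downarrow 0$, local minimizers $E_k\subset \R^8$ with $0\in {\rm Sing}\,\partial E_k$, and singular points $x_k\in {\rm Sing}\,\partial E_k\cap (B_{1/2}\setminus B_{\lambda_k})$ satisfying
$$
\frac{P(E_k;B_1)}{1^{n-1}}-\frac{P(E_k;B_{4\lambda_k^2})}{(4\lambda_k^2)^{n-1}}\le \delta_k.
$$
Rescaling by $|x_k|\in [\lambda_k,1/2]$, I set $F_k:=E_k/|x_k|$, $y_k:=x_k/|x_k|\in\partial B_1$, so $F_k$ is still a local minimizer with $\{0,y_k\}\subset {\rm Sing}\,\partial F_k$, and $1/|x_k|\ge 2$ while $4\lambda_k^2/|x_k|\le 4\lambda_k\to 0$. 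The uniform perimeter bound $P(F_k;B_R)\le CR^{n-1}$ for minimizers (from comparison with a half-space competitor) and Federer compactness let me pass to a subsequence along which $y_k\to y\in\partial B_1$ and $F_k\to F$ in $L^1_{\rm loc}$, with $F$ a local minimizer in $\R^8$ and $P(F_k;B_\rho)\to P(F;B_\rho)$ for all but countably many $\rho$.

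Next I will show that $F$ is a cone in $B_2$ and that it has a singular ray. Scale invariance of the monotone quantity on $F_k$ together with the hypothesis gives that the oscillation of $\rho\mapsto P(F_k;B_\rho)/\rho^{n-1}$ on $[4\lambda_k,2]$ is at most $\delta_k$; passing to the limit on good radii and letting the lower endpoint drop to $0^+$ shows that $\rho\mapsto P(F;B_\rho)/\rho^{n-1}$ is constant on $(0,2]$, and the equality case of Theorem~\ref{thm:monot} forces $F$ to be a cone in $B_2$. To detect $y$ as a singular point of $F$ I invoke the universal density gap $\eps_0>0$ for which $\Theta_{\partial E'}(p)\ge 1+\eps_0$ at every singular point of any local minimizer in $\R^8$, a standard consequence of Allard's regularity together with compactness of minimizing cones; combined with upper semicontinuity of the density under varifold convergence this gives $\Theta_{\partial F}(y)\ge 1+\eps_0>1$. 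Because $F$ is a cone in $B_2$, the density is constant along radial rays, so $\{ty:t\in(0,2)\}\subset {\rm Sing}\,\partial F$; this one dimensional piece of the singular set contradicts $\dim_\cH({\rm Sing}\,\partial F)\le 0$ and closes the argument.

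The main obstacle I anticipate is the passage of the monotonicity formula to the limit on the \emph{full} interval $(0,2]$, specifically the identification of $\lim_{\rho\to 0^+} P(F;B_\rho)/\rho^{n-1}$ with what the densities at the origin of $F_k$ produce in the limit. This requires combining convergence of perimeters on cocountably many radii, upper semicontinuity of the density, and the equality case of the monotonicity formula to rule out any loss of monotonicity mass as $k\to\infty$; once this is carefully handled, the dimension argument in $\R^8$ makes the rest of the proof substantially shorter than its cluster counterpart.
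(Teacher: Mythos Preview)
Your proposal is correct and follows essentially the same approach as the paper's proof: contradiction, rescale by $|x_k|$, pass to a limit minimizer that is a cone in a ball containing $\partial B_1$, use the density gap and upper semicontinuity to see that the limit of $x_k/|x_k|$ is singular, and then obtain a singular segment (ray) contradicting discreteness of the singular set in $\R^8$. If anything, you supply more justification than the paper does (the paper simply asserts that the limit is a cone and that both $0$ and $x$ are singular there), and your anticipated ``obstacle'' about the passage to the limit of the monotone quantity near $\rho\to 0^+$ is a routine technicality that the paper omits entirely.
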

\begin{proof} By scaling, we can assume $r= 1$.
	Assume by contradiction that there exists a sequence $\lambda_k \to 0$, and a sequence $\{E_k\}_{k\in \N}$  of sets locally minimizing the perimeter functional such that $0\in {\rm Sing} (E_k)$ and such that
	\begin{equation*}
	\lim_{k\to \infty} \Big( {P(E_k; B_1)} - \frac{P(E_k; B_{4\lambda_k^2})}{(4\lambda_k^2)^{n-1}} \Big) = 0,
	\end{equation*}
	
	and a sequence of singular points $x_k \in {\rm Sing} (E_k) \cap (B_{1/2}\setminus B_{\lambda_k })$.
	We consider the rescaled sets $E_k/|x_k|$ and, up to a subsequence, we have that $\lim_{k\to \infty} x_k / |x_k| = x \in \partial B_1$ and that $E_k/|x_k|$ converges to a limit cone $E_{\infty}$ in $L^1(B_1)$, minimizing in $B_1$. 
	
	By the upper semicontinuity of the density and by the fact that singular points have density greater than $1+\eps$ for some $\eps>0$, both the origin and the point $x$ are singular in the limit cone. Therefore, the whole segment $x [0,1]$ is singular but this gives a contradiction since the singular set of any $7$-dimensional hypersurface in $\R^8$ is discrete.
	
\end{proof}
\begin{proof}[Proof of Theorem~\ref{thm:8}]
	The proof follows from Proposition~\ref{prop:quant-discreteness-8} and Lemma~\ref{lemma:covering} as in the proof of Theorem~\ref{thm:main}.
\end{proof}


\bibliography{references}
\bibliographystyle{is-alpha}



\end{document}